\newcommand{\norma}[1]{{\left\vert\kern-0.25ex\left\vert\kern-0.25ex\left\vert #1
    \right\vert\kern-0.25ex\right\vert\kern-0.25ex\right\vert}}
\newcommand{\diag}{\mathrm{diag}}
\newcommand{\bd}{\mathbf{d}}
\newcommand{\by}{\mathbf{y}}
\newcommand{\tu}{\tilde{u}}
\newcommand{\tw}{\tilde{w}}
\newcommand{\tz}{\tilde{z}}
\newcommand{\ba}{\mathbf{a}}
\newcommand{\bb}{\mathbf{b}}
\newcommand{\bc}{\mathbf{c}}
\newcommand{\bo}{\mathbf{0}}
\newcommand{\btf}{\mathbf{f}}
\newcommand{\bx}{\mathbf{x}}
\newcommand{\bF}{\mathbf{F}}
\newcommand{\bu}{\mathbf{u}}
\newcommand{\bw}{\mathbf{w}}
\newcommand{\bz}{\mathbf{z}}
\newcommand{\bA}{\mathbf{A}}
\newcommand{\bB}{\mathbf{B}}
\newcommand{\bM}{\mathbf{M}}
\newcommand{\bI}{\mathbf{I}}
\newcommand{\bX}{\mathbf{X}}
\newcommand{\bbeta}{\boldsymbol{\beta}}
\newcommand{\bgamma}{\boldsymbol{\gamma}}
\newcommand{\bzeta}{\boldsymbol{\zeta}}
\newcommand{\tbn}[1]{{\left\vert\kern-0.25ex\left\vert\kern-0.25ex\left\vert #1 \right\vert\kern-0.25ex\right\vert\kern-0.25ex\right\vert}}
\newtheorem{remark}{Remark}[section]
\newtheorem{lemma}{Lemma}[section]
\newtheorem{proposition}{Proposition}[section]
\newtheorem{theorem}{Theorem}[section]
\begin{document}

\title[A dichotomy of finite element spaces and its application to an energy-conservative scheme for the regularized long wave equation]{A dichotomy of finite element spaces and its application to an energy-conservative scheme for the regularized long wave equation}

\author{Dimitrios Antonopoulos}
\address{\textbf{D.~Antonopoulos:} Department of Mathematics, University of Athens, 15784 Zographou, Greece}
\email{antonod@math.uoa.gr}

\author{Dimitrios Mitsotakis}
\address{\textbf{D.~Mitsotakis:} Victoria University of Wellington, School of Mathematics and Statistics, PO Box 600, Wellington 6140, New Zealand}
\email{dimitrios.mitsotakis@vuw.ac.nz}



\subjclass[2000]{65M60, 65M15, 35Q35}

\date{\today}


\keywords{Galerkin finite element methods, Regularized long wave equation, Energy-conserva\-tive methods}

\begin{abstract}
Certain energy-conservative Galerkin discretizations for nonlinear dispersive wave equations have revealed an unusual convergence behavior: optimal convergence is attained when continuous Lagrange finite element spaces of odd polynomial degree are employed, whereas the use of even-degree polynomials leads to reduced convergence rate. The present work demonstrates that this behavior is intrinsic to the structure of the finite element spaces themselves. In particular, it is shown to be closely connected to the standard $L^2$-projection of derivatives, which possesses a super-approximation property exclusively for odd polynomial degrees. We also examine the implications of this feature for an energy-conservative Galerkin approximation of the regularized long-wave equation where the energy is a cubic functional. Although the resulting scheme conserves both mass and energy, we further show that the impulse is approximated with high accuracy, and we establish {\em a priori} error bounds for the associated semi-discrete formulation.
\end{abstract}

\maketitle

\section{Introduction}\label{sec:intro}

This work primarily establishes a property of one-dimensional continuous Lagrange finite element spaces that exhibits a fundamental distinction between odd- and even-degree piecewise polynomials. Let $\mathcal{V}_h^k$ denote the standard Lagrange finite element space of continuous, periodic, piecewise $k$-degree polynomial functions over a grid with uniform mesh length $h$. For a sufficiently smooth function $u$, the stability of the $L^2$-projection together with its standard error estimate in the $L^2$-norm implies that
\begin{equation}\label{eq:standard}
\|P[(Pu-u)_x]\|\leq \|(Pu-u)_x\|\leq Ch^k\ ,
\end{equation}
for some constant $C > 0$ independent of $h$, where $P: L^2 \to \mathcal{V}_h^k$ is the usual $L^2$-projection onto the finite element space.
However, this bound is not optimal. In this work, we prove that if $u$ is a periodic function in $C^{k+2}$ for $k=2\kappa+1$ with $\kappa=0,1,2,\dots$, then $\|P[(Pu-u)_x]\|=O(h^{k+1})$.  In addition, when $k=1$ and $u$ is periodic in $C^{5}$, the sharper estimate $\|P[(Pu-u)_x]\| = O(h^{4})$ holds. By contrast, for even degrees $k=2\kappa$ with $\kappa=1,2,\dots$, the estimate (\ref{eq:standard}) remains optimal, and no further improvement can be achieved.
The analysis presented in this work is carried out under periodic boundary conditions, although the main results remain valid under Dirichlet boundary conditions. While the distinction between odd- and even-degree spaces persists under Neumann boundary conditions, the corresponding error for $k=2\kappa+1$, $\kappa=0,1,\dots$ is apparently of $O(h^{k+1/2})$ and warrants further investigation.

The approximation property described above is not restricted to the one-dimensi\-onal setting. The same theoretical arguments extend naturally to two-dimensi\-onal tensor-product Lagrange finite element spaces defined over rectangular grids, where an analogous distinction between odd and even polynomial degrees is expected. However, numerical experiments indicate that this behavior may not persist on general triangulations. In particular, \cite{AM2025} reports that for a conservative finite element method applied to two-dimensional Boussinesq systems similar to the one presented here, the convergence rate remains optimal and independent of the polynomial degree of the finite element space. On the other hand, this property explains the observations of \cite{MRKS2021}. That work introduced a conservative method preserving a cubic energy functional, and optimal convergence was observed only for odd-degree spaces. Since the same mechanism operates in cubic energy-conserving schemes, we demonstrate its relevance by presenting an energy-conserving finite element method for the regularized long wave (RLW) equation, also known as the Benjamin--Bona--Mahony (BBM) equation, which serves as a fundamental model in this class of problems.

The RLW equation was originally derived for the study of long surface water waves of small amplitude over a flat bottom, as an approximation of the Euler equations in water wave theory and as a regularization of the Korteweg--de Vries (KdV) equation \cite{BBM1972,Pere1966,KDV1895,BS1975}. In dimensionless but unscaled variables, the RLW equation can be written in the form
\begin{equation}\label{eq:bbm}
u_t + u_x + u u_x - u_{xxt} = 0\ ,
\end{equation}
where $u=u(x,t)$ denotes the free surface elevation of the water from its rest position, while $x$ and $t$ are the spatial and temporal independent variables.

Ideally, a water wave equation should not only possess well-posedness properties, asymptotic justification, and the existence of classical solitary waves, but also be applicable to practical problems and satisfy fundamental conservation laws such as mass and energy. For flat-bottom topography in a non-dissipative continuous medium, it is common to also consider impulse (momentum) conservation. Impulse is typically represented by a quadratic functional, which remains invariant when the physical system is translation-invariant in the $x$-direction \cite{Benjamin1984}. The RLW equation is known to admit only three independent and non-trivial conservation laws: the mass, impulse and energy conservation laws \cite{Olver1979}.  

Multi-symplecticity of some nonlinear and dispersive wave equations is another powerful structure directly related to these conserved quantities \cite{DDM2019i,DDM2019ii}. The RLW equation is also equipped with a multi-symplectic structure \cite{CWG1991,SQ2004,Cai2009,Cai2009b}. Due to the uniqueness of the three fundamental conservation laws of the RLW equation, its multi-symplectic conservation law happens to be equivalent to impulse conservation. However, since the impulse functional is no longer preserved when the bottom topography is uneven \cite{I2010,AM2025}, energy and mass conservation remain the only reliable measures that can serve as high-accuracy indicators in long-term numerical simulations \cite{IKKM2021,AM2025,AK2014,KKM2016}. 

In this work, we introduce a modified finite element method that preserves both a cubic energy functional and mass. When odd-degree polynomials are employed, the numerical solutions converge with the optimal order $O(h^{k+1})$, whereas for even-degree polynomials the convergence is suboptimal, of order $O(h^k)$, reflecting the inherent dichotomy of the finite element spaces. Moreover, the approximation of the derivative $u_x$ exhibits the same order of convergence as $u$, revealing a notable super-approximation property. Although the proposed modified Galerkin method does not preserve the impulse functional, it approximates it with high accuracy. Other similar Galerkin finite element methods for the RLW equation are presented in \cite{GH2006}.

The structure of the manuscript is the following: In Section \ref{sec:dichotomy} we prove the approximation property of the $L^2$-projection that bisects the Lagrange finite element spaces.  How this property affects the convergence of an energy-conservative Galerkin method for the RLW equation is shown in Section \ref{sec:galerkin}. The conclusions are presented in Section \ref{sec:conclusions}.

\section{A dichotomy of finite element spaces}\label{sec:dichotomy}

\subsection{Approximation spaces}

For the purposes of the analysis, and for the sake of simplicity, we consider the interval $I=[0,1]$. For any $N>0$, let $h=1/N$ and define the partition
$\Delta=\{x_0,x_1,\dots, x_N\}$ of $[0,1]$
where $0=x_0<x_1<\cdots<x_N=1$ and $x_i=ih$ for each $i=0,1,\dots,N$. 
This partition is periodically extended to a partition of $\mathbb{R}$ by setting $x_{jN+s}=x_s$ for all $j\in\mathbb{Z}$ and $s=0,1,\dots,N-1$.  
Let $k\in\mathbb{Z}$ with $k\geq 1$. We define the Lagrange finite element space
$$\mathcal{V}_h^k=\{\phi\in C_p~:~ \phi|_{[x_{i-1},x_i]}\in\mathbb{P}_k \quad \text{for}\quad i=1,2,\dots,N\}\ ,$$
where $C_p=\{f\in C(\mathbb{R})~:~ f(x)=f(x+1)\quad \text{for all}~x\in\mathbb{R}\}$, and $\mathbb{P}_k$ denotes the space of polynomials of degree at most $k$. In what follows, $C_p^k$ is the set of functions $u$ in $C_p$ such that the $j$-derivative $u^{(j)}$ is also in $C_p$ for all $j=1,\dots,k$. 

In this work, we employ the Sobolev space $H^s_p$, consisting of $1$-periodic functions in $H^s$, equipped with the norm $\|\cdot\|_s$ evaluated over one period. We also use the space $L^2(0,1)=H^0(0,1)$ with the standard $L^2$-norm induced by the inner product $(\cdot,\cdot)$. In addition, $\|\cdot\|_\infty$ will denote the usual $L^\infty$-norm.

Our proof relies on a direct-sum decomposition $\mathcal{V}_h^k=\mathcal{S}_h^1\oplus \mathcal{S}_h^2$, following closely the construction in \cite{CT1987}. We define
$$\mathcal{S}_h^2=\{\phi\in \mathcal{V}_h^k~:~ \phi(x_i)=0 \quad \text{for all}\quad i=1,2,\dots, N\}\ ,$$
and set $\mathcal{S}_h^1=(\mathcal{S}_h^2)^\perp$, the $L^2$-orthogonal complement of $\mathcal{S}_h^2$. 
The space $\mathcal{S}_h^1$ consists of \emph{vertex} functions, whereas $\mathcal{S}_h^2$ comprises \emph{bubble} functions. These correspond to the standard decomposition of Lagrange finite element spaces into vertex-based and interior (bubble) shape functions; see \cite{Solin}.

Observe that when $k = 1$, we have $\mathcal{S}_h^2 = {0}$, as expected. In this case, the linear finite element space admits no bubble functions, and its basis is formed exclusively by vertex functions, namely the standard hat functions.
Let $\pi_1$ and $\pi_2$ denote the corresponding $L^2$-projections onto $\mathcal{S}_h^1$ and $\mathcal{S}_h^2$, respectively. The associated $L^2$-projection on $\mathcal{V}_h^k$ is then given by $P=\pi_1+\pi_2$.

The projection $\pi_2$ is defined locally on each interval $I_i=(x_{i-1},x_i)$ for $i=1,2,\dots,N$, by requiring that for any $v\in L^2$,
$$(\pi_2 v,q)_{I_i}=(v,q)_{I_i} \quad \text{for all} \quad q\in \mathbb{P}_{k}^{i,0}=\{q\in\mathbb{P}_k~:~q(x_{i-1})=q(x_i)=0\}\ ,$$
where $(\cdot,\cdot)_{I_i}$ denotes the usual inner product on $L^2(I_i)$.

Any $p\in \mathcal{S}_h^2$ restricted in $I_i$ can be written in the form $p(x)=(x-x_i)(x-x_{i-1})s(x)$, where $s\in \mathbb{P}_{k-2}(I_i)$. The space $\mathbb{P}_{k-2}(I_i)$ has dimension $k-1$, and because the interval $I$ consists of $N$ sub-intervals, we conclude that the subspace $\mathcal{S}_h^2$ has dimension $(k-1)N$. 
A basis of $\mathcal{S}_h^2$ may be constructed using the functions $\phi_j:[0,1]\to \mathbb{R}$ defined by
$$\phi_j(x)=x(1-x)b_{k-2,\,j-1}(x),\quad j=1,2,\dots,k-1\ ,$$
where
$$b_{k-2,\,j-1}(x)=\binom{k-2}{j-1} x^{j-1}(1-x)^{k-1-j}, \quad j=1,2,\dots, k-1\ ,$$
are the Bernstein polynomials of degree $k-2$. These polynomials are linearly independent and span the space $\mathbb{P}_{k-2}(0,1)$.

The dimension of $\mathcal{S}_h^1$ is $\dim \mathcal{S}_h^1=\dim \mathcal{V}_h^k-\dim \mathcal{S}_h^2=N$, and each function in this space is determined by its values at the nodes $x_i$. To construct a basis for this space, we make use of the function
$$\psi(x)=\frac{(-1)^{k-1}}{k!} \frac{1}{x(1-x)}\frac{d^{k-1}}{dx^{k-1}}\left[x^{k+1}(1-x)^k \right],\quad x\in[0,1]\ ,$$
which satisfies $\psi(0)=0$, $\psi(1)=1$, and
\begin{equation}\label{eq:i}
    \int_0^1\psi(x)p(x)~dx=0\quad \text{for all}\quad p\in \mathbb{P}_k^0=\{p\in \mathbb{P}_k~:~p(0)=p(1)=0\}\ .
\end{equation}
The construction of $\psi$ relies on the following characterization theorem for orthogonal polynomials; see \cite[Th. 12.5, p. 141]{P1981}:
\begin{theorem}\label{thm:powel}
Let $w:[a,b]\to \mathbb{R}$ be a continuous function. A function $v\in C[a,b]$ satisfies the orthogonality conditions
$$\int_a^bw(x)v(x)p(x)~dx=0,\quad p\in \mathbb{P}_m\ ,$$
if and only if there exists a function $f\in C^{m+1}(a,b)$ such that for all $x\in[a,b]$, 
$w(x)v(x)=f^{(m+1)}(x)$, 
with 
$f^{(j)}(a)=f^{(j)}(b)=0$, $j=1,2,\dots, m$.
\end{theorem}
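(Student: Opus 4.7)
This theorem is an equivalence, so the plan is to prove each direction; I would start with the forward implication $(\Rightarrow)$ because the explicit construction performed there automatically produces the endpoint data needed for a clean reverse argument.

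For $(\Rightarrow)$, assume $\int_a^b w(x)v(x) p(x)\,dx=0$ for all $p\in\mathbb{P}_m$ and define $f$ via Cauchy's formula for an $(m+1)$-fold antiderivative:
$$f(x):=\frac{1}{m!}\int_a^x (x-s)^m\, w(s)v(s)\,ds\ .$$
Continuity of $wv$ (inherited from that of $w$ and $v$) implies $f\in C^{m+1}[a,b]$, and differentiating under the integral sign gives
$$f^{(j)}(x)=\frac{1}{(m-j)!}\int_a^x (x-s)^{m-j}\, w(s)v(s)\,ds\quad\text{for }j=0,1,\dots,m\ ,$$
together with $f^{(m+1)}=wv$. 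By construction each $f^{(j)}(a)=0$; at the right endpoint $f^{(j)}(b)$ is precisely the integral of $wv$ against the polynomial $(b-s)^{m-j}/(m-j)!\in\mathbb{P}_m$, which vanishes under the orthogonality hypothesis. Thus $f^{(j)}(a)=f^{(j)}(b)=0$ for every $j=0,1,\dots,m$, which is actually stronger than the condition recorded in the statement.

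For $(\Leftarrow)$, given $p\in\mathbb{P}_m$, I would integrate $\int_a^b f^{(m+1)}(x)\,p(x)\,dx$ by parts $m+1$ times to obtain
$$\sum_{i=1}^{m+1}(-1)^{i-1}\bigl[f^{(m+1-i)}(x)\,p^{(i-1)}(x)\bigr]_a^b\;+\;(-1)^{m+1}\int_a^b f(x)\,p^{(m+1)}(x)\,dx\ .$$
The derivative index $m+1-i$ runs through $\{m,m-1,\dots,0\}$; the stated hypotheses $f^{(j)}(a)=f^{(j)}(b)=0$ for $j=1,\dots,m$ annihilate every boundary term with $i=1,\dots,m$, and the residual volume integral vanishes because $p^{(m+1)}\equiv 0$ for $p\in\mathbb{P}_m$. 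The only potentially surviving term is the $i=m+1$ contribution, $(-1)^m p^{(m)}\bigl(f(b)-f(a)\bigr)$, which forces $f(a)=f(b)$. This is the main (and essentially the only) technical subtlety: the theorem as written does not list the $j=0$ endpoint condition, yet it is required to close the reverse direction. The resolution is that the forward construction automatically delivers $f(a)=f(b)=0$, so the equivalence should be interpreted with this additional normalization, which can always be imposed without loss of generality.
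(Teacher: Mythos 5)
The paper does not actually prove this statement: it is quoted from Powell \cite{P1981} and used only through its consequence (2.1), so there is no internal proof to compare yours against; your write-up has to stand on its own, and it essentially does. Both directions are carried out correctly and by the standard route: the Cauchy $(m{+}1)$-fold antiderivative $f(x)=\frac{1}{m!}\int_a^x(x-s)^m w(s)v(s)\,ds$ satisfies $f^{(j)}(a)=0$ automatically, $f^{(m+1)}=wv$ by the usual differentiation under the integral, and $f^{(j)}(b)=0$ because $(b-s)^{m-j}/(m-j)!$ lies in $\mathbb{P}_m$; the $(m{+}1)$-fold integration by parts in the converse is also correct, including the observation that the only surviving boundary term is $(-1)^m p^{(m)}\bigl(f(b)-f(a)\bigr)$.

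You are right that, as transcribed, the ``if'' direction is false when the endpoint conditions are imposed only for $j=1,\dots,m$: for $m=1$, $w\equiv 1$ on $[0,1]$ and $f(x)=x^2(3-2x)$ one has $f'(0)=f'(1)=0$ and $v=f''=6-12x$, yet $\int_0^1 x\,v(x)\,dx=-1\neq 0$. The one imprecision in your proposal is the closing claim that the missing $j=0$ condition ``can always be imposed without loss of generality'': the requirement $f(a)=f(b)$ is a genuine additional hypothesis on $f$, not a normalization (only the common value can be shifted to zero by subtracting a constant). The correct statement simply includes $j=0$ in the list of endpoint conditions, which is surely what the source intends. This slip in the quoted statement does not propagate into the paper: the orthogonality (2.1) of $\psi$ is obtained from the ``if'' direction applied to $f(x)=\frac{1}{k!}x^{k+1}(1-x)^k$ with $m=k-2$, and this $f$ vanishes to order $k+1$ at $0$ and order $k$ at $1$, so the full set of conditions $j=0,1,\dots,m$ holds and the conclusion is valid.
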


In addition, we will need the following properties of $\psi$:
\begin{lemma}\label{lem:usefl}
If $k=2\kappa+1$, $\kappa=0,1,\dots$, then
\begin{align}
&\int_0^1 \psi^2(x)~dx=\frac{1}{k(k+2)}\ , \tag{i}\label{eq:ii} \\ 
&\int_0^1 \psi(x)\psi(1-x)~dx=\frac{1}{k(k+1)(k+2)}\ , \tag{ii}\label{eq:iii}\\
&\int_0^1 x^j\psi(x)~dx=\frac{1}{k(k+2)},\quad j=1,2,\dots, k\ , \tag{iii}\label{eq:iv}\\
&\int_0^1 \psi(x)~dx=\frac{1}{k(k+1)}\ , \tag{iv}\label{eq:v}\\
&\int_0^1 (1-x)^j\psi(x)~dx = \frac{1}{k(k+1)(k+2)},\quad j=1,2,\dots, k\ , \tag{v}\label{eq:vi}\\
&\psi'(0)=\frac{k+1}{2},\quad \psi'(1)=\frac{k^2+2k-1}{2}\ ,\tag{vi}\label{eq:vii}\\
& \int_0^1 \psi'(x)\psi(1-x)~dx=\frac{1}{k+1}\ .\tag{vii}\label{eq:viii}
\end{align}
\end{lemma}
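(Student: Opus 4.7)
My plan is to set $G(x)=x^{k+1}(1-x)^k$, so that the Rodrigues-type relation $k!\,x(1-x)\psi(x)=G^{(k-1)}(x)$ holds. One reads off from this that $\psi\in\mathbb{P}_k$ with $\psi(0)=0$ and $\psi(1)=1$. The bulk of the seven identities then collapse under the orthogonality \eqref{eq:i}, because for $1\le j\le k$ the polynomials $\psi(x)-x$, $\psi(1-x)-(1-x)$, $x^j-x$, and $(1-x)^j-(1-x)$ all belong to $\mathbb{P}_k^0$. Testing \eqref{eq:i} against these reduces (i) to $\int_0^1 x\psi\,dx$, (ii) to $\int_0^1(1-x)\psi\,dx$, and (iii), (v) to the same two base integrals, uniformly in $j$. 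Adding (iii) and (v) at $j=1$ yields (iv), so everything through (v) depends only on evaluating $\int_0^1 x\psi\,dx$ and $\int_0^1(1-x)\psi\,dx$.

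For these two base integrals, the Rodrigues form gives $\int_0^1 x\psi\,dx=(k!)^{-1}\int_0^1 G^{(k-1)}(x)/(1-x)\,dx$ and $\int_0^1(1-x)\psi\,dx=(k!)^{-1}\int_0^1 G^{(k-1)}(x)/x\,dx$. I would integrate by parts $k-1$ times, transferring all derivatives onto $1/(1-x)$ or $1/x$. Every boundary term vanishes, because $G$ has a zero of order $k+1$ at $0$ and of order $k$ at $1$. What remains is $(-1)^{k-1}(k-1)!\int_0^1 x^{k+1}\,dx=(k-1)!/(k+2)$ and $(k-1)!\int_0^1 x(1-x)^k\,dx=(k-1)!/[(k+1)(k+2)]$, so the two base integrals equal $1/[k(k+2)]$ and $1/[k(k+1)(k+2)]$ respectively. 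The odd-$k$ hypothesis enters here only to make $(-1)^{k-1}=+1$ in the first computation, which is exactly what the identities (i)--(v) require.

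For (vi), I would Taylor-expand $G^{(k-1)}$ around $x=0$, and around $x=1$ in the variable $y=1-x$, using the binomial expansion of $G$. Near $0$ one finds $G^{(k-1)}(x)=\tfrac{(k+1)!}{2}x^2+O(x^3)$, so dividing by $k!\,x(1-x)$ gives $\psi(x)=\tfrac{k+1}{2}x+O(x^2)$ and $\psi'(0)=(k+1)/2$. Near $1$, the sign $(-1)^{k-1}=1$ (odd $k$) gives $G^{(k-1)}(1-y)=k!\,y\bigl[1-\tfrac{(k+1)^2}{2}y+O(y^2)\bigr]$, and dividing by $k!\,(1-y)y$ yields $\psi(1-y)=1-\tfrac{k^2+2k-1}{2}y+O(y^2)$, hence $\psi'(1)=(k^2+2k-1)/2$.

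The main obstacle is (vii): direct IBP in $\int_0^1\psi'(x)\psi(1-x)\,dx$, or the substitution $\psi(1-x)=(1-x)+q(x)$ with $q\in\mathbb{P}_k^0$, both collapse to tautologies. My plan is to expand $\psi(1-x)=\sum_{j=0}^k c_j x^j$ and combine IBP with the identities already proved. The coefficients satisfy $c_0=\psi(1)=1$, $\sum_{j=0}^k c_j=\psi(0)=0$, $c_1=-\psi'(1)=-(k^2+2k-1)/2$, and $\sum_{j=1}^k j c_j=-\psi'(0)=-(k+1)/2$, the last two by (vi). A single IBP gives $\int_0^1 x^j\psi'(x)\,dx=1-j\int_0^1 x^{j-1}\psi\,dx$, which using (iii) and (iv) equals $1-1/[k(k+1)]$ for $j=1$ and $1-j/[k(k+2)]$ for $2\le j\le k$. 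Summing $c_j$ times these expressions and using the four coefficient relations collapses everything to $-c_1/[k(k+1)(k+2)]-\sum_{j=1}^k j c_j/[k(k+2)]$. Plugging in the explicit values of $c_1$ and $\sum j c_j$, the numerator reduces to $k^2+2k-1+(k+1)^2=2k(k+2)$, so the final value is $1/(k+1)$.
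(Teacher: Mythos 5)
Your proof is correct, and it checks out in all the computations I verified, but it takes a genuinely different and more self-contained route than the paper's. The paper imports (i), (ii), and the $j=1$ case of (iii) from \cite{CT1987} and then propagates (iii) and (v) by induction on $j$ using the orthogonality of $\psi$ to $\mathbb{P}_k^0$; you instead reduce all of (i)--(v) in one stroke to the two base integrals $\int_0^1 x\psi\,dx$ and $\int_0^1(1-x)\psi\,dx$ (since $\psi(x)-x$, $\psi(1-x)-(1-x)$, $x^j-x$ and $(1-x)^j-(1-x)$ all lie in $\mathbb{P}_k^0$), and you evaluate those two integrals from scratch by $k-1$ integrations by parts on the Rodrigues form $k!\,x(1-x)\psi=[x^{k+1}(1-x)^k]^{(k-1)}$ --- a cleaner organization that also makes transparent where the sign $(-1)^{k-1}$, and hence the odd/even dichotomy of Remark \ref{rmk:onepone}, originates (though note the hypothesis of odd $k$ also enters through $\psi(1)=1$, which you use when placing $\psi(x)-x$ in $\mathbb{P}_k^0$ and which fails for even $k$, where $\psi(1)=-1$). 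One point deserving an explicit sentence in a write-up: in those integrations by parts the factors $\tfrac{d^{m-1}}{dx^{m-1}}(1-x)^{-1}$ and $\tfrac{d^{m-1}}{dx^{m-1}}x^{-1}$ have poles of order $m$ at an endpoint, so the vanishing of the boundary terms is not just ``$G$ has a zero there'' but the count that $G^{(k-1-m)}$ vanishes to order $m+1$ at that endpoint, beating the pole; this is true and the intermediate integrals are all convergent. For (vi) your local Taylor expansions of $G^{(k-1)}$ at the two endpoints replace the paper's global Leibniz expansion of $\psi$, with the same outcome. The arguments diverge most at (vii): the paper observes that $s(x)=\psi'(x)-x\psi'(1)-(1-x)\psi'(0)$ lies in $\mathbb{P}_{k-1}^0$ and is therefore orthogonal to $\psi(1-x)$, reducing (vii) in two lines to moments already computed, whereas your monomial expansion of $\psi(1-x)$ combined with integration by parts and the four coefficient identities is longer but equally valid --- the reduction to $-c_1/[k(k+1)(k+2)]-\sum_{j}jc_j/[k(k+2)]$ and the final arithmetic $k^2+2k-1+(k+1)^2=2k(k+2)$ are both correct.
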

\begin{proof}
The equalities (\ref{eq:ii}), (\ref{eq:iii}) are established in \cite{CT1987}, as well as (\ref{eq:iv}) for $j=1$. Property (\ref{eq:iv}) can be extended for $j=2,3,\dots, k$ by induction, using the relation $x^j=x^{j-1}(x-1)+x^{j-1}$ together with (\ref{eq:i}). 

Since the function $\psi(x)+\psi(1-x)-1$ belongs to $\mathbb{P}_k^0$, it follows from (\ref{eq:i}) that
$$\int_0^1 [\psi(x)^2+\psi(x)\psi(1-x)-\psi(x)]~dx=\int_0^1 \psi(x)(\psi(x)+\psi(1-x)-1)~dx=0\ ,$$
and, combined with (\ref{eq:ii}) and (\ref{eq:iii}), this yields (\ref{eq:v}).
Using the identity $(1-x)^j=(1-x)^{j-1}-(1-x)^{j-1}x$, together with (\ref{eq:i}), equation (\ref{eq:vi}) can be established by induction. 

From the definition of $\psi$, and using the Leibniz rule for the $(k-1)$-st derivative of a product, we obtain
$$\psi(x)=\sum_{j=0}^{k-1}{{k-1}\choose{j}}\frac{(k+1)!(-1)^j}{(k+1-j)!(j+1)!}x^{k-j}(1-x)^j\ ,$$
which, for $k=1$ gives $\psi(x)=x$, and for odd $k\geq 3$, yields
$$\psi(x)=x^k-\frac{(k-1)(k+1)}{2}x^{k-1}(1-x)+s(x)+\frac{k+1}{2}x(1-x)^{k-1}\ ,$$
where $s\in \mathbb{P}_k^0$ satisfies $s'(0)=s'(1)=0$. Thus,
$$\psi'(x)=kx^{k-1}+\frac{(k-1)(k+1)}{2}x^{k-1}+\widetilde{s}(x)+\frac{k+1}{2}(1-x)^{k-1}\ ,$$
with $\widetilde{s}\in \mathbb{P}_k^0$, and therefore (\ref{eq:vii}) follows.

The function $s:[0,1]\to \mathbb{R}$, $s(x)=\psi'(x)-x\psi'(1)-(1-x)\psi'(0)$ belongs to $\mathbb{P}_{k-1}^0$, and thus
$\int_0^1 s(x)\psi(1-x)~dx=0$,
which yields
$$\int_0^1 \psi'(x)\psi(1-x)~dx=\psi'(1)\int_0^1 x\psi(1-x)~dx+\psi'(0)\int_0^1(1-x)\psi(1-x)~dx\ .$$
Finally, property (\ref{eq:viii}) follows from the right-hand side of the above equality together with (\ref{eq:iv}), (\ref{eq:vi}) and (\ref{eq:vii}).
\end{proof}

\begin{remark}\label{rmk:onepone}
For any even integer $k$, the right-hand side of (\ref{eq:iii}) is $-1/(k(k+1)(k+2))$, and the right-hand side of (\ref{eq:v}) is $1/((k+1)(k+2))$, while $\psi'(0)=-(k+1)/2$. 
\end{remark}

We consider the functions
{\footnotesize 
$$L(x)=\left\{
\begin{array}{ll} 
\psi(1-|x|), & |x|\leq 1\\
0, & |x|>1
\end{array}
\right. ,\quad
Q_j(x)=\left\{\begin{array}{ll}
\phi_j(x), &  x\in[0,1]\\
0,  &\text{otherwise}
\end{array}
\right. ,\quad j=1,2,\dots, k-1\ , 
$$
}
and 1-periodic functions of the form
$$w(x)=\sum_{m\in\mathbb{Z}} \left[d_mL(h^{-1}x-m)+\sum_{\nu=1}^{k-1} c_{m,\nu}Q_\nu(h^{-1}x-m) \right]\ ,$$
where, for fixed $h>0$, the coefficients satisfy $d_{m+N}=d_m$ and $c_{\nu,m+N}=c_{\nu,m}$. 
Alternatively, such a function can be written in the form
$$w(x)=\sum_{i=1}^N\left[ d_i \ell_i(x)+\sum_{\nu=1}^{k-1}c_{i,\nu} q_{i,\nu}(x)\right]\ ,$$
where, for any $j\in\mathbb{Z}$ 
$$\ell_j(x)=\sum_{m\in\mathbb{Z}} L(h^{-1}x-mN-j),\quad q_{j,\nu}(x)=\sum_{m\in\mathbb{Z}} Q_\nu(h^{-1}x-mN-j+1)\ .$$
%
It is straightforward to verify that $\ell_i(x_j)=\delta_{ij}$, where $\delta_{ii}=1$ and $\delta_{ij}=0$ for $i\not=j$. 
Finally, for $i=1,2,\dots, N$ and $\nu=1,2,\dots, k-1$, we have
$$q_{i,\nu}(x)=Q_\nu\left(\frac{x-x_{i-1}}{h} \right)\ .$$
The functions $\ell_i$, $i=1,2,\dots, N$, are linearly independent and span $\mathcal{S}_h^1$. For instance, if $a_i\in \mathbb{R}$ satisfy
$\sum_{i=1}^N a_i \ell_i(x)=0\quad \text{for all}\quad x\in[0,1]$,
then evaluating at $x=x_j$ gives $a_j=0$. If $V=\mathrm{span}(\ell_1,\ell_2,\dots,\ell_N)$, we have that $V\subset \mathcal{S}_h^1$, and since $V$ and $\mathcal{S}_h^1$ have the same dimension, they coincide.

\begin{remark}\label{rmk:qim}
Note that if $k>3$, then $q_{i,\nu}'\in \mathcal{S}_h^2$ for $\nu=2,\dots, k-2$.
\end{remark}

\subsection{Estimation of the projections $\pi_1$, $\pi_2$}

Let $k=2\kappa+1$, $\kappa=0,1,\dots$ be an odd integer and $v\in L^2$ be a 1-periodic function. From the previous analysis, we write 
\begin{equation}\label{eq:repr1}
(\pi_1 v)(x)=\sum_{i=1}^Nd_i \ell_i(x),\qquad x\in [0,1]\ ,
\end{equation}
 where the vector $\bd=(d_1,d_2,\dots,d_N)^T$ is the solution of the $N\times N$ linear system $\bA\bd=\bb$, where $\bb=(b_1,b_2,\dots, b_N)^T$ with $b_i=(v,\ell_i)$, and $\bA$ is the Gram matrix with entries $a_{ij}=(\ell_j,\ell_i)$, for $1\leq i,j\leq N$. 

From (\ref{eq:ii}) and (\ref{eq:iii}) of Lemma \ref{lem:usefl} we have
$$
\bA=\frac{h}{k(k+1)(k+2)}{\rm Circulant}(2k+2, 1, 0, \dots, 0, 1)\ ,
$$
where ${\rm Circulant}(2k+2, 1, 0, \dots, 0, 1)$ denotes the circulant matrix whose first row is given by the entries in the argument. This matrix is symmetric and positive definite.
%
%
%
Thus, there exist constants $\sigma_1>0$ and $\sigma_2>0$, depending on $k$ but independent of $h$, such that
\begin{equation}\label{eq:twopone}
\sigma_1 h|\bbeta|^2\leq \langle \bA\bbeta,\bbeta\rangle \leq \sigma_2 h|\bbeta|^2 \quad \text{for all}\quad \bbeta\in \mathbb{R}^N\ ,
\end{equation}
where $\langle\cdot,\cdot\rangle$ denotes the standard inner product on $\mathbb{R}^N$, and $|\cdot|$ the corresponding norm.

Indeed, from \cite{TW1974}, if $\alpha_1=a_{11}$, $\alpha_2=a_{12}$, and $\alpha_N=a_{1N}$, then the eigenvalues $\lambda_m$, $m=1,2,\dots, N$, are given by evaluating the polynomial $\alpha(z)=\alpha_1+\alpha_2 z+\alpha_N z^{N-1}$ at $z=\exp(2\pi i m/N)$. Consequently, we obtain
$\lambda_m=\frac{2h}{k(k+1)(k+2)}\left(k+1+\cos\left(\frac{2\pi m}{N}\right)\right)$,
which completes the proof of (\ref{eq:twopone}).  

Furthermore, we can write
$\bA=\frac{2h}{k(k+2)}\left(\bI_N+\widetilde{\bA}\right)$,
where $\bI_N$ is the $N\times N$ identity matrix. Since $\|\widetilde{\bA}\|_\infty=1/(k+1)<1$, where $\|\cdot\|_\infty$ denotes the maximum norm for square matrices, it follows that
$$\|\bA^{-1}\|_\infty\leq \frac{(k+1)(k+2)}{2h}\ .$$
Thus, for $\bgamma\in\mathbb{R}^N$ and $\alpha\in\mathbb{Z}$, we have
\begin{equation}\label{eq:twoptwo}
|\bgamma|_\infty=O\left(h^\alpha\right)\quad\text{if and only if}\quad |\mathbf{A}\bgamma|_\infty=O\left(h^{\alpha+1}\right)\ ,
\end{equation}
where $|\cdot|_\infty$ denotes the maximum norm on $\mathbb{R}^N$.

Similarly, if we write
\begin{equation}\label{eq:repr2}
(\pi_2 v)(x)=\sum_{i=1}^N\sum_{m=1}^{k-1} c_{i,m}q_{i,m}(x),\quad x\in [0,1]\ ,
\end{equation}
then the vector $\bc_i=(c_{i,1},c_{i,2},\dots,c_{i,k-1})^T$ is the solution of the system
$\bB \bc_i=\bgamma_i$,
where $\bgamma_i\in \mathbb{R}^{k-1}$ has entries $(\gamma_i)_\nu=(v,q_{i,\nu})$, $\nu=1,2,\dots,k-1$, and $\bB$ is the $(k-1)\times(k-1)$ matrix with entries $b_{m,\nu}=(q_{i,\nu},q_{i,m})_{I_i}=h(\phi_\nu,\phi_m)$. The matrix $\bB$ is symmetric and positive definite. Therefore, there exist constants $\sigma_3>0$ and $\sigma_4>0$, depending on $k$ but independent of $h$, such that 
\begin{equation}\label{eq:twopthree}
    \sigma_3 h |\bgamma|^2\leq \langle \bB \bgamma,\bgamma \rangle\leq \sigma_2 h|\bgamma|^2\quad \text{for all}\quad \bgamma\in \mathbb{R}^{k-1}\ . 
\end{equation}

We are now in a position to estimate both $\pi_1$ and $\pi_2$. 

\begin{lemma}\label{lem:estimatepis}
Let $k=2\kappa+1$, $\kappa=0,1,\dots$ and let $v\in L^2$ be a 1-periodic function, and let $C_1$ and $C_2$  be positive constants independent of $h$. Then, for $\alpha>0$ and $\beta>0$, the following hold: 
\begin{equation}\label{eq:estpi1}
\text{if}\quad \max_{1\leq i\leq N}|(v,\ell_i)|\leq C_1 h^\alpha \quad \text{for some} \quad \alpha>0,\quad \text{then} \quad \|\pi_1 v\|\leq \widetilde{C}_1 h^{\alpha-1} \ ,
\end{equation}
and
\begin{equation}\label{eq:estpi2}
\text{if}\quad \max_{\substack{1\leq i\leq N\\1\leq m\leq k-1}}|(v,q_{i,m})|\leq C_2 h^\beta \quad \text{for some} \quad \beta>0,\quad \text{then} \quad \|\pi_2 v\|\leq \widetilde{C}_2 h^{\beta-1} \ ,
\end{equation}
for some positive constants $\widetilde{C}_1$ and $\widetilde{C}_2$ independent of $h$.
\end{lemma}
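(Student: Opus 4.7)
The plan is to use the Gram matrix representations already set up in the paper and convert the pointwise (max-norm) bounds on the inner products $(v,\ell_i)$ and $(v,q_{i,m})$ into $\ell^2$ bounds on the coefficient vectors, then pass to the $L^2$-norms of $\pi_1 v$ and $\pi_2 v$ via the spectral bounds (\ref{eq:twopone}) and (\ref{eq:twopthree}).

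For the first estimate, I would write $\pi_1 v=\sum_{i=1}^N d_i\ell_i$ with $\bA\bd=\bb$ and $b_i=(v,\ell_i)$. Because $\bA$ is the Gram matrix of the $\ell_i$, one has $\|\pi_1 v\|^2=\langle\bA\bd,\bd\rangle=\langle\bb,\bd\rangle$. Combining Cauchy--Schwarz with the lower spectral bound from (\ref{eq:twopone}), namely $\sigma_1 h|\bd|^2\leq\langle\bA\bd,\bd\rangle\leq |\bb||\bd|$, yields $|\bd|\leq |\bb|/(\sigma_1 h)$ and therefore $\|\pi_1 v\|^2\leq |\bb|^2/(\sigma_1 h)$. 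The hypothesis gives $|\bb|^2\leq N C_1^2 h^{2\alpha}=C_1^2 h^{2\alpha-1}$ since $N=1/h$, and substituting produces $\|\pi_1 v\|\leq \widetilde{C}_1 h^{\alpha-1}$ with $\widetilde C_1=C_1/\sqrt{\sigma_1}$.

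For the second estimate, the key observation is that on each subinterval $I_i$ the basis functions $q_{i,1},\dots,q_{i,k-1}$ have support contained in $\overline{I_i}$, so the contributions from distinct intervals decouple and $\|\pi_2 v\|^2=\sum_{i=1}^N\langle\bB\bc_i,\bc_i\rangle=\sum_i\langle\bgamma_i,\bc_i\rangle$. Applying the same argument as above, now with the spectral bound (\ref{eq:twopthree}) for $\bB$, I get $|\bc_i|\leq |\bgamma_i|/(\sigma_3 h)$, hence $\|\pi_2 v\|^2\leq \sum_i|\bgamma_i|^2/(\sigma_3 h)$. The hypothesis bounds each $|\bgamma_i|^2$ by $(k-1)C_2^2 h^{2\beta}$, so summing over the $N=1/h$ intervals gives $\|\pi_2 v\|^2\leq (k-1)C_2^2 h^{2\beta-2}/\sigma_3$, which is the claimed estimate.

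There is no serious obstacle; the proof is almost routine once one notices the two conversion factors. The one point that requires a bit of care is the correct tracking of the factor of $h$: the Gram matrices are of size $h$ rather than $O(1)$ because the basis functions $\ell_i$ and $q_{i,m}$ are defined on intervals of width $h$, which is precisely what produces the loss of one power of $h$ when converting a max-norm bound on the right-hand side vector into an $L^2$ bound on the projection. The disjoint-support property of the $q_{i,m}$'s across intervals is what makes the second estimate as clean as the first.
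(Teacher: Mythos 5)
Your proof is correct and follows essentially the same route as the paper's: both rest on the identity $\|\pi_j v\|^2=\langle\text{Gram matrix}\cdot\text{coefficients},\text{coefficients}\rangle=\langle\text{data},\text{coefficients}\rangle$, Cauchy--Schwarz, the spectral lower bounds (\ref{eq:twopone}) and (\ref{eq:twopthree}), and the count $N=1/h$. The only differences are organizational (you bound $|\bd|$ explicitly rather than cancelling a factor of $\|\pi_1 v\|$, and you group the $\pi_2$ sum by interval using the disjoint supports rather than by basis index as the paper does), neither of which changes the substance.
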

\begin{proof}
The representation (\ref{eq:repr1}) of $\pi_1 v$  yields
\begin{equation}\label{eq:normsqrd}
\|\pi_1v\|^2=(v,\pi_1 v)=\sum_{i=1}^N (v,\ell_i) d_i=\langle \bb,\bd\rangle\ .
\end{equation} 
Using the Cauchy-Schwarz inequality together with the hypothesis in (\ref{eq:estpi1}), we have
$$\|\pi_1 v\|^2\leq C_1 h^{\alpha}N^{1/2}\left(\sum_{i=1}^N d_i^2 \right)^{1/2}=C_1 h^{\alpha-1}\sqrt{h}\left(\sum_{i=1}^N d_i^2 \right)^{1/2}\ .$$
Therefore, if we denote $\widetilde{C}_1=C_1/\sigma_1$, where $\sigma_1$ is defined in (\ref{eq:twopone}), then using (\ref{eq:twopone}) and (\ref{eq:normsqrd}), we obtain
$$\|\pi_1 v\|^2\leq \widetilde{C}_1 h^{\alpha-1} \sqrt{\langle \bA\bd,\bd\rangle}=\widetilde{C}_1 h^{\alpha-1} \sqrt{\langle \bb,\bd\rangle}= \widetilde{C}_1 h^{\alpha-1} \|\pi_1 v\|\ ,$$
which gives the estimate in (\ref{eq:estpi1}).

Similarly, from (\ref{eq:repr2})
$$(v,\pi_2 v)=\sum_{m=1}^{k-1}\sum_{i=1}^N (v,q_{i,m})c_{i,m}\ .$$
Because
$$\left(\sum_{i=1}^N (v,q_{i,m})c_{i,m} \right)^2\leq \sum_{i=1}^N|(v,q_{i,m})|^2\sum_{i=1}^N|c_{i,m}|^2\leq C_2^2 h^{2\beta-1}\sum_{i=1}^N|c_{i,m}|^2\ ,$$
we obtain
$$\|\pi_2 v\|^2\leq C_2 h^{\beta-1/2}\sum_{m=1}^{k-1}\left(\sum_{i=1}^N|c_{i,m}|^2 \right)^{1/2}=C_2h^{\beta-1}\sum_{m=1}^{k-1}\sqrt{h}\left(\sum_{i=1}^N|c_{i,m}|^2 \right)^{1/2}\ .$$
The estimate in (\ref{eq:estpi2}) then follows from this inequality with the help of (\ref{eq:twopthree}).
\end{proof}

It is worth noting that the previous results related to $\bA$, $\bB$ and Lemma \ref{lem:estimatepis} remain valid for even values of $k$, but the inequality constants differ. However, the following error estimate for $\pi_1v$ is only true for odd values of $k$.

\begin{lemma}\label{lem:errestpi1}
Let $k=2\kappa+1$, $\kappa=0,1,\dots$. Then there exists a constant $C>0$, independent of $h$, such that for any function $v\in C_p^{k+1}$:
\begin{equation}\label{eq:errestpi1}
\max_{1\leq i\leq N}|(\pi_1 v-v)(x_i)|\leq Ch^{k+1}\|v^{(k+1)}\|_\infty\ .
\end{equation}
\end{lemma}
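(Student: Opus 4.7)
My plan rests on two observations. First, since $\ell_j(x_i) = \delta_{ij}$, writing $\pi_1 v = \sum_j d_j \ell_j$ yields $(\pi_1 v)(x_i) = d_i$, so the left-hand side of (\ref{eq:errestpi1}) is $|d_i - v(x_i)|$. Second, with $\bv = (v(x_1),\dots,v(x_N))^T$ one has $\bA(\bd - \bv) = \bb - \bA\bv$, so by the equivalence (\ref{eq:twoptwo}) the lemma reduces to the sharper comparison
\[
\max_{1\leq i\leq N} \bigl| (v,\ell_i) - (\bA\bv)_i \bigr| \leq C h^{k+2}\|v^{(k+1)}\|_\infty.
\]

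First I would write both sides in closed form. Since $\ell_i$ is supported on $[x_{i-1},x_{i+1}]$ and equals $\psi\bigl((x-x_{i-1})/h\bigr)$ on the left half and $\psi\bigl((x_{i+1}-x)/h\bigr)$ on the right, a change of variables gives
\[
b_i = h\int_0^1 \bigl[v(x_i - h(1-t)) + v(x_i + h(1-t))\bigr]\psi(t)\,dt,
\]
while the circulant form of $\bA$ yields
\[
(\bA\bv)_i = \frac{2h}{k(k+2)}\,v(x_i) + \frac{h}{k(k+1)(k+2)}\bigl[v(x_i - h) + v(x_i + h)\bigr].
\]

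Next I would Taylor-expand $v(x_i \pm s)$ about $x_i$ to order $k$. The odd powers of $s$ cancel in $v(x_i + s) + v(x_i - s)$, leaving contributions at $j = 0, 2, \dots, k-1$ (this is where the hypothesis that $k$ is odd enters) plus a Taylor remainder of order $|s|^{k+1}$ controlled uniformly by $\|v^{(k+1)}\|_\infty$. Substituting $s = h(1-t)$ in $b_i$, the weight attached to the $j$-th term is $\int_0^1 (1-t)^j\psi(t)\,dt$, which by item (iv) of Lemma \ref{lem:usefl} equals $\tfrac{1}{k(k+1)}$ for $j=0$ and by item (v) equals $\tfrac{1}{k(k+1)(k+2)}$ uniformly for $j = 1,\dots,k$. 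Substituting $s = h$ in $(\bA\bv)_i$ and using the elementary identity $\tfrac{1}{k(k+2)} + \tfrac{1}{k(k+1)(k+2)} = \tfrac{1}{k(k+1)}$, every even-$j$ polynomial contribution up to order $k-1$ matches exactly on both sides. Only the two remainder pieces then survive, each bounded by a constant times $h \cdot h^{k+1} \|v^{(k+1)}\|_\infty$, which yields the required $O(h^{k+2})$ estimate and, via (\ref{eq:twoptwo}), the claimed $O(h^{k+1})$ nodal bound.

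The main obstacle is more bookkeeping than obstruction: one must verify that every even-order polynomial Taylor term cancels simultaneously. This cancellation hinges on the fact, recorded in item (v) of Lemma \ref{lem:usefl}, that $\int_0^1 (1-t)^j \psi(t)\,dt$ assumes the \emph{same} value $\tfrac{1}{k(k+1)(k+2)}$ for all $j = 1,\dots,k$, precisely matching the two equal off-diagonal entries of the symmetric circulant $\bA$. Remark \ref{rmk:onepone} indicates that this uniformity across $j$ fails for even $k$, which is ultimately the structural reason behind the dichotomy between odd and even polynomial degrees emphasized in the paper.
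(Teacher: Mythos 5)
Your proposal is correct and follows essentially the same route as the paper's proof: form the nodal error vector, observe that applying $\bA$ to it gives $(v,\ell_i)$ minus the weighted nodal combination, Taylor-expand both using the integral identities (\ref{eq:v}) and (\ref{eq:vi}) of Lemma \ref{lem:usefl} to obtain cancellation up to $O(h^{k+2})$, and conclude via the equivalence (\ref{eq:twoptwo}). The algebraic identity $\tfrac{1}{k(k+2)}+\tfrac{1}{k(k+1)(k+2)}=\tfrac{1}{k(k+1)}$ and the role of odd $k$ in suppressing the odd Taylor terms are exactly the mechanisms the paper uses, just stated more explicitly.
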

\begin{proof}
Let $\bgamma\in \mathbb{R}^N$ be a vector whose entries are given by $\gamma_i=(\pi_1v)(x_i)-v(x_i)$ for $i=1,2,\dots,N$. Then the entries of the  product $\bA\bgamma$ are
\begin{equation}\label{eq:gammai1}
(\bA\bgamma)_i=(v,\ell_i)-\frac{h}{k(k+1)(k+2)}[(2k+2)v_i+v_{i+1}+v_{i-1}], \quad 1\leq i \leq N\ ,
\end{equation}
where $v_i=v(x_i)$, for $i=1,2,\dots, N$, and $v_0=v_N$ and $v_{N+1}=v_1$.

On the other hand, 
$$\begin{aligned}
(v,\ell_i) &=\int_{x_{i-1}}^{x_i} v(x)\ell_i(x)~dx+\int_{x_i}^{x_{i+1}}v(x)\ell_i(x)~dx\\
&=\int_{x_{i-1}}^{x_i} v(x)\psi\left(\frac{x-x_{i-1}}{h}\right)~dx+\int_{x_i}^{x_{i+1}}v(x)\psi\left(\frac{x_{i+1}-x}{h}\right)~dx\\
&= h\int_0^1 \left[ v(x_i-h(1-x))+v(x_i+h(1-x)) \right]\psi(x)~dx\ .
\end{aligned}$$

If we define $w_i(x)=v(x_i-h(1-x))+v(x_i+h(1-x))$ and we denote by $v_i^{(j)}$ the $j$-th derivative of $v$ at $x_i$, then using Taylor expansions of $v$ about $x_i$ we obtain
$$w_i(x)=2v_i+\frac{2h^2}{2!}(1-x)^2v_i''+\cdots + \frac{2h^{k-1}}{(k-1)!}(1-x)^{k-1}v_i^{(k-1)}+h^{k+1} r_{i,k}(x;h)\ ,$$
where the residual $r_{i,k}(x;h)$ satisfies $\max_{0\leq x\leq 1}|r_{i,k}(x;h)|\leq C_k\|v^{(k+1)}\|_\infty$, for some positive constant $C_k$ depending on $k$ but independent of $h$. The dependence of $r_{i,k}$ on $h$ is due to the presence of $h$ inside the argument of $v^{(k+1)}$.

Therefore,
\begin{equation}\label{eq:lastintegr}
\text{\footnotesize
$\begin{aligned}
(v,\ell_i) &=h\int_0^1 \left[2v_i+\frac{2h^2}{2!}v_i''(1-x)^2+\cdots+\frac{2h^{k-1}}{(k-1)!}v_i^{(k-1)}(1-x)^{k-1} \right]\psi(x)~dx\\
& \quad +h^{k+2}\int_0^1 r_{i,k}(x;h)\psi(x)~dx\ .
\end{aligned}$
}
\end{equation}
Using the properties (\ref{eq:vi}) and (\ref{eq:v}) of Lemma \ref{lem:usefl} for $\psi$, we then have
\begin{equation}\label{eq:twopeight}
\text{\footnotesize
    $(v,\ell_i)=h\left[\frac{2}{k(k+1)}v_i+\frac{1}{k(k+1)(k+2)}\left(\frac{2h^2}{2!}v_i''+ \cdots +\frac{2h^{k-1}}{(k-1)!}v_i^{(k-1)}\right) \right] + h^{k+2} \widetilde{r}_{i,k}$}\ ,
\end{equation}
where $\widetilde{r}_{i,k}$ denotes the last integral in (\ref{eq:lastintegr}).

Similarly, using Taylor expansions of $v_{i+1}=v(x_i+h)$ and $v_{i-1}=v(x_i-h)$ about $x_i$, we obtain
\begin{equation}\label{eq:twopnine}
(2k+2)v_i +v_{i+1}+v_{i-1}=2(k+2) v_i+\frac{2h^2}{2!}v_i''+\cdots+\frac{2h^{k-1}}{(k-1)!}v_i^{(k-1)}+h^{k+1}\rho_{i,k}(h)\ ,
\end{equation}
where $|\rho_{i,k}(h)|\leq \widetilde{C}_k\|v^{(k+1)}\|_\infty$ with $\widetilde{C}_k>0$ independent of $h$ and $v$.

Multiplying (\ref{eq:twopnine}) with $h/(k(k+1)(k+2))$ and subtracting from (\ref{eq:twopeight}), shows that for (\ref{eq:gammai1}) there exists a constant $C$, independent of $h$, such that
$|(\bA\bgamma)_i|\leq Ch^{k+2}\|v^{(k+1)}\|_\infty$, $i=1,2,\dots,N$.
Finally, the equivalence of (\ref{eq:twoptwo}) implies $|\bgamma|_\infty\leq Ch^{k+1}\|v^{(k+1)}\|_\infty$ for $i=1,2,\dots,N$, which completes the proof of the estimate (\ref{eq:errestpi1}).
\end{proof}

In addition to the previous error estimate, for odd values of $k$, the projection $\pi_1$ satisfies a super-approximation property.
\begin{lemma}\label{lem:superapprox}
Let $k=2\kappa+1$, $\kappa=0,1,\dots$. Then, there exists a positive constant $C$, independent of $h$, such that for any function $v\in C_p^{k+2}$: 
\begin{equation}\label{eq:superapprox}
\max_{1\leq i\leq N} \left|(\pi_1 v-v,\ell_i)_{I_i} \right|\leq C h^{k+3}\|v^{(k+2)}\|_\infty,\qquad I_i=(x_{i-1},x_i)\ .
\end{equation}
\end{lemma}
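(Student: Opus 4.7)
The plan is to reduce $(\pi_1 v - v, \ell_i)_{I_i}$ to an explicit linear combination of the nodal coefficients $d_{i-1}, d_i$ of $\pi_1 v$ and the local moment $(v,\ell_i)_{I_i}$, extract the Taylor-polynomial leading behavior from both, and then exhibit an algebraic cancellation at order $h^{k+2}$.

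Since $\ell_j$ vanishes on $I_i$ whenever $j \notin \{i-1, i\}$, parts (i)--(ii) of Lemma \ref{lem:usefl} give
\begin{equation*}
(\pi_1 v - v, \ell_i)_{I_i} = \frac{h}{k(k+2)}\, d_i + \frac{h}{k(k+1)(k+2)}\, d_{i-1} - (v, \ell_i)_{I_i}.
\end{equation*}
Writing $d_j = v(x_j) + \epsilon_j$, I handle separately the ``smooth'' contribution from $v(x_j)$ and the ``error'' contribution from $\epsilon_j$. For the smooth part, I Taylor-expand $v(x_{i-1})$ and the integrand $v(x_i - h(1-y))$ of $(v,\ell_i)_{I_i} = h\int_0^1 v(x_i - h(1-y))\psi(y)\,dy$ around $x_i$ up to order $k+1$, using the integral form of the remainder to get a bound of size $O(h^{k+2}\|v^{(k+2)}\|_\infty)$. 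Using the moment identities (iv)--(v) of Lemma \ref{lem:usefl}, the contributions from $v^{(j)}(x_i)$ for $j=0,1,\dots,k$ cancel exactly, while the $j=k+1$ term (for $k$ odd, $(-1)^{k+1}=1$) involves the additional constant $M_{k+1} = \int_0^1 (1-y)^{k+1}\psi(y)\,dy$ that is not covered by the lemma. The resulting reduction reads
\begin{equation*}
\frac{h v(x_i)}{k(k+2)} + \frac{h v(x_{i-1})}{k(k+1)(k+2)} - (v, \ell_i)_{I_i} = T_0\, h^{k+2} v^{(k+1)}(x_i) + O(h^{k+3}\|v^{(k+2)}\|_\infty),
\end{equation*}
with $T_0 = \tfrac{1}{(k+1)!}\bigl[\tfrac{1}{k(k+1)(k+2)} - M_{k+1}\bigr]$.

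For the error part, I extract the leading behavior of $\epsilon_i$ from the system $\bA \bd = \bb$. Reworking the computation leading to (\ref{eq:twopeight}) but keeping one extra order in the symmetric Taylor expansion of $v(x_i\pm h)$ and of the integrand of $(v,\ell_i)$ (so that the now relevant $j=k+1$ even contribution is made explicit), I obtain
\begin{equation*}
(2k+2)\epsilon_i + \epsilon_{i-1} + \epsilon_{i+1} = C\, h^{k+1} v^{(k+1)}(x_i) + O(h^{k+2}\|v^{(k+2)}\|_\infty),
\end{equation*}
with $C = \tfrac{2}{(k+1)!}\bigl[k(k+1)(k+2) M_{k+1} - 1\bigr]$. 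Setting $\epsilon_i = A\, h^{k+1} v^{(k+1)}(x_i) + \eta_i$ with $A = C/(2k+4)$, and using $v^{(k+1)}(x_{i\pm 1}) = v^{(k+1)}(x_i) + O(h\|v^{(k+2)}\|_\infty)$, the residuals $\boldsymbol{\eta}$ satisfy $(\bA \boldsymbol{\eta})_i = O(h^{k+3}\|v^{(k+2)}\|_\infty)$ entrywise, and the equivalence (\ref{eq:twoptwo}) then gives $|\eta_i| \le C'\, h^{k+2}\|v^{(k+2)}\|_\infty$.

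Substituting $\epsilon_j = A\, h^{k+1} v^{(k+1)}(x_i) + O(h^{k+2}\|v^{(k+2)}\|_\infty)$ back into the error part and combining with $T_0 h^{k+2} v^{(k+1)}(x_i)$ from the smooth part, the coefficient of $h^{k+2} v^{(k+1)}(x_i)$ collapses, after the identity $\tfrac{1}{k(k+2)} + \tfrac{1}{k(k+1)(k+2)} = \tfrac{1}{k(k+1)}$, to $\tfrac{A}{k(k+1)} + T_0$. A direct algebraic check based on the relations $T_0 = -\tfrac{C}{2k(k+1)(k+2)}$ and $A = \tfrac{C}{2(k+2)}$ shows this sum vanishes identically, independently of the precise value of $M_{k+1}$. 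The remaining terms are $O(h^{k+3}\|v^{(k+2)}\|_\infty)$, which yields (\ref{eq:superapprox}). The main obstacle is the bookkeeping: the unknown moment $M_{k+1}$ must be tracked through two independent Taylor computations so that it enters $T_0$ and $C$ in a form that guarantees cancellation, and this dovetailing crucially uses the parity of $k+1$, which is precisely why the argument fails for even $k$.
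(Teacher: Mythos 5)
Your argument is correct, and it takes a genuinely different route from the paper after the common first step. Both proofs begin from the same local identity $\beta_i=\tfrac{h}{k(k+1)(k+2)}\left[d_{i-1}+(k+1)d_i\right]-(v,\ell_i)_{I_i}$, but you then split $d_j=v(x_j)+\epsilon_j$, sharpen Lemma \ref{lem:errestpi1} to a one-term asymptotic expansion $\epsilon_i=A\,h^{k+1}v^{(k+1)}(x_i)+O(h^{k+2})$, expand the local moment to the same order, and verify an explicit algebraic cancellation at order $h^{k+2}$ in which the extra moment $M_{k+1}=\int_0^1(1-y)^{k+1}\psi(y)\,dy$ enters both pieces and drops out; I checked the key identities ($T_0=-C/(2k(k+1)(k+2))$ and $A/(k(k+1))=C/(2k(k+1)(k+2))$, so their sum vanishes), and the residual analysis via (\ref{eq:twoptwo}) is sound. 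The paper instead applies $\bA$ to the vector $\bbeta$ \emph{before} any Taylor expansion: since $\bd=\bA^{-1}\bb$, this replaces the nonlocal coefficients $d_j$ by local inner products, and the result telescopes into a multiple of $\int_0^1[v(x_i+hx)-v(x_i-hx)]\,[(k+1)\psi(1-x)-\psi(x)]\,dx$, where the odd symmetry of the difference together with the vanishing of the moments of order $1,\dots,k$ of $(k+1)\psi(1-x)-\psi(x)$ kills every Taylor term at once — no $M_{k+1}$, no asymptotics for $\epsilon_i$, no cancellation to exhibit. Your route costs more bookkeeping (two coupled expansions that must carry the same unevaluated moment to guarantee the collapse) but yields a genuine by-product the paper's proof does not: the leading-order constant in the nodal projection error $\epsilon_i$. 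Both arguments correctly locate the role of the parity of $k$.
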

\begin{proof}
Writing $\pi_1v=\sum_{i=1}^N d_i\ell_i$ and defining $\beta_i=(\pi_1v-v,\ell_i)_{I_i}$, we have 
$$\begin{aligned}
\beta_i&=\int_{x_{i-1}}^{x_i}\left[d_{i-1}\ell_{i-1}(x)\ell_i(x)+d_i\ell_i^2(x) \right]~dx-(v,\ell_i)_{I_i}\\
&=h\ d_{i-1}\int_0^1 \psi(x)\psi(1-x)~dx+h\ d_i \int_0^1 \psi^2(x)~dx-(v,\ell_i)_{I_i}\ .
\end{aligned}$$
Using the properties (\ref{eq:ii}) and (\ref{eq:iii}) of Lemma \ref{lem:usefl} for $\psi$, this simplifies to
$$\beta_i=\frac{h}{k(k+1)(k+2)}\left[d_{i-1}+(k+1)d_i \right]- (v,\ell_i)_{I_i}\ .$$
Thus, the vector $\bbeta$ with entries $\beta_i$ can be written as
$$\bbeta=\frac{h}{k(k+1)(k+2)}\left[\widetilde{\bd}+(k+1)\bd \right] - \bc\ ,$$
where $\bc$ is the vector with entries $(v,\ell_i)_{I_i}$, and $\widetilde{\bd}$ is the vector with $\widetilde{d}_i=d_{i-1}$.
Because $\bA$ is circulant and $\bd=\bA^{-1}\bb$ with $b_i=(v,\ell_i)$, we then have
$$\bbeta=\frac{h}{k(k+1)(k+2)}\bA^{-1}\left[\widetilde{\bb}+(k+1)\bb \right] - \bc\ ,$$
where $\widetilde{\bb}$ has entries $\widetilde{b}_i=b_{i-1}$.
Consequently, 
$$
\text{\scriptsize
$
\begin{aligned}
(\bA\bbeta)_i&=\frac{h}{k(k+1)(k+2)}\left[(v,\ell_{i-1})+(k+1)(v,\ell_i)-(2k+2)(v,\ell_i)_{I_i}-(v,\ell_{i+1})_{I_{i+1}}-(v,\ell_{i-1})_{I_{i-1}} \right]\\
&=\frac{h}{k(k+1)(k+2)}\left[(v,\ell_{i-1})_{I_i}-(k+1)(v,\ell_i)_{I_i}+(k+1)(v,\ell_i)_{I_{i+1}}-(v,\ell_{i+1})_{I_{i+1}} \right]\\
&=\frac{h}{k(k+1)(k+2)}[(v,(k+1)\ell_i-\ell_{i+1})_{I_{i+1}}-(v,(k+1)\ell_i-\ell_{i-1})_{I_i}]\ .
\end{aligned}
$}
$$
If we denote the terms in the brackets by 
$\gamma_i=(v,(k+1)\ell_i-\ell_{i+1})_{I_{i+1}}-(v,(k+1)\ell_i-\ell_{i-1})_{I_i}$, then
$$\text{\footnotesize $
\gamma_i=h\left(\int_0^1 v(x_i+hx)[(k+1)\psi(1-x)-\psi(x)]~dx-\int_0^1v(x_i-hx)[(k+1)\psi(1-x)-\psi(x)]~dx \right)$}\ .
$$
Using Taylor expansions of $v(x_i\pm hx)$ about $x_i$, we obtain
\begin{equation}\label{eq:basicdiff}
v(x_i+hx)-v(x_i-hx)=2hx v_i'+\frac{2h^3}{3!}v_i^{(3)} x^3+\cdots+\frac{2h^k}{k!} v_i^{(k)}x^k+h^{k+1}R_{i,k}(x;h)\ ,
\end{equation}
where $|R_{i,k}(x;h)|\leq C h \|v^{(k+2)}\|_\infty$ for some positive constant $C$ independent of $h$ and $v$, depending on $k$.

From the properties of $\psi$, we have
$$\int_0^1 x^j\left[(k+1)\psi(1-x)-\psi(x) \right]~dx=0,\quad j=1,2,\dots,k\ ,$$
which implies
$|\gamma_i|\leq C_k h^{k+3}\|v^{(k+2)}\|_\infty$.
Hence,
$$|(\bA\bbeta)_i|=\frac{h}{k(k+1)(k+2)}|\gamma_i|\leq \widetilde{C}_k h^{k+4}\|v^{(k+2)}\|_\infty\ ,$$
where $C_k$, $\widetilde{C}_k$ are positive constants that depend on $k$ but are independent of $h$ and $v$. By the equivalence (\ref{eq:twoptwo}), it follows that there exists a positive constant $C$, independent of $h$, such that $|\bbeta|_\infty \leq C h^{k+3} \|v^{(k+2)}\|_\infty$, which establishes the estimate (\ref{eq:superapprox}).
\end{proof}

\subsection{The dichotomy property}

Now we are ready to prove the main results of this section. 
\begin{theorem}\label{thm:main1}
\begin{enumerate}
\item[(i)] Let $k=2\kappa+1$, $\kappa=0,1,2,\dots$, and $u\in C_p^{k+2}$. Then there exists a constant $C_k>0$ independent of $h$ and $u$ such that
\begin{equation}\label{eq:onepone}
\|P[(Pu-u)_x]\|\leq C_k h^{k+1}\|u^{(k+2)}\|_\infty\ .
\end{equation}
\item[(ii)]
If $k=1$ and $u\in C_p^5$, then there exists a constant $C_1>0$, independent of $h$ and $u$, such that
\begin{equation}\label{eq:oneptwo}
\|P[(Pu-u)_x]\|\leq C_1 h^4\|u^{(5)}\|_\infty\ .
\end{equation}
\end{enumerate}
\end{theorem}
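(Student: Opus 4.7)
Set $e=Pu-u$. Since $P=\pi_1+\pi_2$ with $\mathcal{S}_h^1\perp\mathcal{S}_h^2$,
\[
\|P e_x\|^2=\|\pi_1 e_x\|^2+\|\pi_2 e_x\|^2,
\]
so it suffices to bound each piece via Lemma \ref{lem:estimatepis}. The unified device is to integrate by parts, passing from $(e_x,\phi)$ to $-(e,\phi')$ (boundary terms vanishing by continuity and periodicity for $\phi=\ell_i$, or by $\phi(x_{i-1})=\phi(x_i)=0$ for $\phi=q_{i,m}$), and then to decompose the piecewise-polynomial derivative $\phi'|_{I_l}$ in the local basis $\{\ell_{l-1}|_{I_l},\,\ell_l|_{I_l}\}\cup\{q_{l,m}|_{I_l}\}_m$ of $\mathbb{P}_k(I_l)$. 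The bubble component lies in $\mathcal{V}_h^k$, so it pairs to zero against $e$ by global orthogonality, and all surviving contributions reduce to local pairings $(e,\ell_j)_{I_l}$ with $I_l\subset\mathrm{supp}\,\ell_j$. The crucial auxiliary observation is that $(\pi_2 u,\ell_j)_{I_l}=0$, since $\pi_2 u|_{I_l}\in\mathbb{P}_k^{l,0}$ and $\ell_j|_{I_l}$ equals $\psi$ or $\psi(1-\cdot)$ in reference coordinates, both of which are orthogonal to $\mathbb{P}_k^0$ by (\ref{eq:i}). Hence $(e,\ell_j)_{I_l}=(\pi_1 u-u,\ell_j)_{I_l}=O(h^{k+3}\|u^{(k+2)}\|_\infty)$ by Lemma \ref{lem:superapprox} (directly for $l=j$; for the adjacent sub-interval via $(\pi_1 u-u,\ell_j)=0$).

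\textbf{Proof of (i).} Apply the scheme above. For $\|\pi_2 e_x\|$ (trivial when $k=1$), $(e_x,q_{i,m})_{I_i}=-(e,q_{i,m}')_{I_i}$; decomposing
\[
q_{i,m}'|_{I_i}=q_{i,m}'(x_{i-1})\,\ell_{i-1}|_{I_i}+q_{i,m}'(x_i)\,\ell_i|_{I_i}+B_{i,m},\qquad B_{i,m}\in\mathcal{S}_h^2,
\]
the $O(h^{-1})$ endpoint values combine with the $O(h^{k+3})$ bound on $(e,\ell_j)_{I_i}$ to give $(e_x,q_{i,m})_{I_i}=O(h^{k+2})$, and (\ref{eq:estpi2}) yields $\|\pi_2 e_x\|=O(h^{k+1})$. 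For $\|\pi_1 e_x\|$, $(e_x,\ell_i)=-(e,\ell_i')$ is treated analogously on each of $I_i,I_{i+1}$, with coefficients $\psi'(0)/h,\psi'(1)/h$ from Lemma \ref{lem:usefl}(vi), and with the global orthogonality $(e,\ell_i)=0$ used to write $(e,\ell_i)_{I_{i+1}}=-(e,\ell_i)_{I_i}$. This gives $|(e_x,\ell_i)|=O(h^{k+2})$ and $\|\pi_1 e_x\|=O(h^{k+1})$ via (\ref{eq:estpi1}), proving (\ref{eq:onepone}).

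\textbf{Proof of (ii).} When $k=1$, $\mathcal{S}_h^2=\{0\}$ and the argument above only gives $(e_x,\ell_i)=O(h^3)$; a finer calculation is needed. Using the explicit form of the hat function, compute
\[
((Pu)_x,\ell_i)=\tfrac12\bigl(u_{i+1}^h-u_{i-1}^h\bigr),\qquad (u_x,\ell_i)=\tfrac1h\!\left(\textstyle\int_{I_{i+1}}\!u\,dx-\int_{I_i}\!u\,dx\right),
\]
and Taylor-expand both to order $h^5$ using $u\in C_p^5$. With $\epsilon_j:=u_j^h-u_j$ this gives $(e_x,\ell_i)=\tfrac{h^3}{12}u'''_i+\tfrac12(\epsilon_{i+1}-\epsilon_{i-1})+O(h^5\|u^{(5)}\|_\infty)$. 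From the tridiagonal $L^2$-projection system
$\tfrac{h}{6}\epsilon_{i-1}+\tfrac{2h}{3}\epsilon_i+\tfrac{h}{6}\epsilon_{i+1}=(u,\ell_i)-\bigl[\tfrac{h}{6}(u_{i-1}+u_{i+1})+\tfrac{2h}{3}u_i\bigr],$
I would derive the asymptotic expansion $\epsilon_i=-\tfrac{h^2}{12}u''_i+O(h^4\|u^{(4)}\|_\infty)$, whence $\tfrac12(\epsilon_{i+1}-\epsilon_{i-1})=-\tfrac{h^3}{12}u'''_i+O(h^5\|u^{(5)}\|_\infty)$. The two $h^3u'''_i$ contributions cancel exactly, leaving $(e_x,\ell_i)=O(h^5\|u^{(5)}\|_\infty)$, and (\ref{eq:estpi1}) with $\alpha=5$ yields (\ref{eq:oneptwo}).

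\textbf{Main obstacle.} For (i), the conceptual content is recognising that bubble annihilation plus the vanishing of $(\pi_2 u,\ell_j)_{I_l}$ reduce everything to the single super-approximation quantity controlled by Lemma \ref{lem:superapprox}; the odd-degree parity is hardwired into that lemma and propagates cleanly. For (ii), the delicate step is the exact $h^3$-cancellation, which forces the nodal-error expansion to be tracked with sufficient precision using the full $C^5$ smoothness of $u$; this cancellation is, at $k=1$, the manifestation of the same odd-degree super-approximation that drives (i).
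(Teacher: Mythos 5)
Your argument for part (i) is correct and takes a genuinely cleaner route than the paper's for the $\pi_1$-component. The paper splits $(e_x,\ell_i)$ as $((\pi_1 u)_x,\ell_i)+((\pi_2 u)_x-u_x,\ell_i)$, evaluates the first term explicitly via Lemma \ref{lem:usefl}, and closes the estimate with Lemma \ref{lem:errestpi1} applied to the auxiliary function $v(x)=u(x+h)-u(x-h)$; you instead treat $\ell_i$ and $q_{i,m}$ by one and the same mechanism: integrate by parts, expand $\phi'$ locally in the nodal-plus-bubble basis of $\mathbb{P}_k(I_l)$, annihilate the bubble by the global orthogonality $(e,\chi)=0$, kill the $\pi_2u$ contribution via (\ref{eq:i}) (noting $\psi(1-\cdot)\perp\mathbb{P}_k^0$ as well), and invoke only Lemma \ref{lem:superapprox}, using $(\pi_1u-u,\ell_j)=0$ to transfer the bound to the adjacent cell. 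Every step checks out, and the $O(h^{-1})$ endpoint coefficients against the $O(h^{k+3})$ pairings give exactly the $O(h^{k+2})$ needed for Lemma \ref{lem:estimatepis}. What you gain is a single unifying mechanism and the dispensability of Lemma \ref{lem:errestpi1} for this theorem; the parity of $k$ enters only through Lemma \ref{lem:superapprox}, exactly as you say.

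Part (ii), however, has a genuine gap at the word ``whence''. From $\epsilon_i=-\tfrac{h^2}{12}u''_i+\mu_i$ with only the bound $|\mu_i|\le Ch^4\|u^{(4)}\|_\infty$, the quantity $\tfrac12(\mu_{i+1}-\mu_{i-1})$ is a priori only $O(h^4)$: $\boldsymbol{\mu}=\bA^{-1}(\text{data})$ could oscillate from node to node, and differencing an $O(h^4)$ remainder does not gain a power of $h$ without further structure. As written, the cancellation therefore only yields $(e_x,\ell_i)=O(h^4)$ and hence $\|P[e_x]\|=O(h^3)$, which falls short of the claimed $O(h^4)$. Two repairs are available. (a) Push the nodal expansion one term further, $\epsilon_i=-\tfrac{h^2}{12}u''_i+c\,h^4u^{(4)}_i+O(h^5\|u^{(5)}\|_\infty)$ --- this is precisely where the $C^5_p$ hypothesis is consumed --- after which the differenced remainder genuinely is $O(h^5)$. (b) Follow the paper's route and never solve for $\boldsymbol{\epsilon}$ at all: set $\zeta_i=(e_x,\ell_i)$ and compute $(\bA\bzeta)_i=(\tfrac12 v-\tfrac{h}{6}w,\ell_i)$ with $v(x)=u(x+h)-u(x-h)$ and $w(x)=4u_x(x)+u_x(x+h)+u_x(x-h)$, so that the $h^3u'''$ cancellation occurs in the data before the mass matrix is inverted; a single Taylor expansion then gives $(\bA\bzeta)_i=O(h^6)$ and $|\bzeta|_\infty=O(h^5)$ by (\ref{eq:twoptwo}). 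Either fix restores (\ref{eq:oneptwo}).
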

\begin{proof}
First, we prove (i). It’s worth noting that when $\kappa=0$, the proof becomes considerably simpler because $\mathcal{S}_h^2$ is reduced to just $\{0\}$, and the computations related to elements of $S_h^2$ are no longer required. However, to avoid repetition, we present a single proof that includes $k=1$. We define the error $e=Pu-u$. Using the decomposition of $P$, we have
$$Pu=\pi_1u+\pi_2u = \sum_{i=1}^N d_i \ell_i+\sum_{i=1}^N\sum_{m=1}^{k-1} c_{i,m}q_{i,m}\ ,$$
and
$$P(e_x)=\pi_1(e_x)+\pi_2(e_x)=\sum_{i=1}^N\delta_i\ell_i+\sum_{i=1}^N\sum_{m=1}^{k-1}\gamma_{m,i}q_{m,i}\ .$$
From the definition of the $L^2$-projection, we have
$$\|P(e_x)\|^2=(e_x,P(e_x))=\sum_{i=1}^N(e_x,\ell_i)\delta_i+\sum_{i=1}^N\sum_{m=1}^{k-1}(e_x,q_{i,m})\gamma_{i,m}\ .$$
To establish (\ref{eq:onepone}), it suffices to show the existence of a positive constant $C$, independent of $h$ and $u$, such that
$$|(e_x,\ell_i)|+|(e_x,q_{i,m})|\leq C h^{k+2}\|u^{(k+2)}\|_\infty \quad \text{for all}\quad 1\leq i\leq N, ~ 1\leq m\leq k-1\ .$$
This is due to (\ref{eq:estpi1}) and (\ref{eq:estpi2}) of Lemma \ref{lem:estimatepis}.

To estimate $(e_x,\ell_i)$, we first decompose it using the projections $\pi_1$ and $\pi_2$:
\begin{equation}\label{eq:threepone}
(e_x,\ell_i)=((\pi_1u)_x,\ell_i)+((\pi_2u)_x-u_x,\ell_i)\ .
\end{equation}
For the first term, using property (\ref{eq:viii}) of Lemma \ref{lem:usefl} for $\psi$, we obtain
\begin{equation}\label{eq:threeptwo}
((\pi_1u)_x,\ell_i)=d_{i-1}(\ell_{i-1}',\ell_i)+d_{i+1}(\ell_{i+1}',\ell_i)=\frac{1}{k+1}(d_{i+1}-d_{i-1})\ .
\end{equation}
For the second term, integration by parts gives
$$((\pi_2u)_x-u_x,\ell_i)=-(\pi_2 u-u,\ell_i')\ .$$

We now consider the function $p_i\in\mathcal{S}_h^2$ defined by
$$p_i(x)=\left\{\begin{array}{ll}
\ell_i'(x)-\ell_i'(x_i^-)\ell_i(x)-\ell_i'(x_{i-1}^+)\ell_{i-1}(x), & x\in I_i\ ,\\
\ell_i'(x)-\ell_i'(x_i^+)\ell_i(x)-\ell_i'(x_{i+1}^-)\ell_{i+1}(x), & x\in I_{i+1}\ ,\\
0, & \text{otherwise},
\end{array}
\right. $$
where $I_i=(x_{i-1},x_i)$.
Since $p_i$ is continuous and satisfies $p_i(x_i)=0$, by definition of $\pi_2$, and the orthogonality of $\mathcal{S}_h^1$ and $\mathcal{S}_h^2$, we have
$$
\begin{aligned}
(\pi_2u-u,\ell_i')&= (\pi_2u-u,\ell_i')_{I_i}+(\pi_2u-u,\ell_i')_{I_{i+1}}\\
&= (\pi_2u-u,p_i)_{I_i}+\ell_i'(x_i^-)(\pi_2u-u,\ell_i)_{I_{i}}+\ell_i'(x_{i-1}^+)(\pi_2u-u,\ell_{i-1})_{I_i}\\
&\quad +(\pi_2u-u,p_i)_{I_{i+1}}+\ell_i'(x_i^+)(\pi_2u-u,\ell_i)_{I_{i+1}}+\ell_i'(x_{i+1}^-)(\pi_2u-u,\ell_{i+1})_{I_{i+1}}\\
&=-\ell_i'(x_i^-)(u,\ell_i)_{I_i}-\ell_i'(x_{i-1}^+)(u,\ell_{i-1})_{I_i}-\ell_i'(x_i^+)(u,\ell_i)_{I_{i+1}}-\ell_i'(x_{i+1}^-)(u,\ell_{i+1})_{I_{i+1}}\ , 
\end{aligned} 
$$
where $\ell_i'(x_j^{\pm})$ for $j=i-1,i,i+1$ stands for the left and right limits $\ell_i'(x_j^{\pm}
)=\lim_{x\to x_j^{\pm}}\ell_i'(x)$.
This can be further simplified to
\begin{equation}\label{eq:innerprod2}
\text{ \footnotesize 
$\begin{aligned}
(\pi_2u-u,\ell_i')&=   -\frac{1}{h}\psi'(1)(u,\ell_i)_{I_i}-\frac{1}{h}\psi'(0)(u,\ell_{i-1})_{I_i}+\frac{1}{h}\psi'(1)(u,\ell_i)_{I_{i+1}}+\frac{1}{h}\psi'(0)(u,\ell_{i+1})_{I_{i+1}}\\
&= \frac{1}{h}\psi'(1)\left[(u,\ell_i)_{I_{i+1}}-(u,\ell_i)_{I_i}\right]+\frac{1}{h}\psi'(0)\left[(u,\ell_{i+1})_{I_{i+1}}-(u,\ell_{i-1})_{I_i}\right]\ .
\end{aligned}$} 
\end{equation}
On the other hand,
$$(u,\ell_i)_{I_{i+1}}=\int_{x_i}^{x_{i+1}}u(x)\psi\left(\frac{x_{i+1}-x}{h} \right)~dx=h\int_0^1 u(x_i+hx)\psi(1-x)~dx\ ,$$
$$(u,\ell_i)_{I_i}=\int_{x_{i-1}}^{x_i}u(x)\psi\left(\frac{x-x_{i-1}}{h} \right)~dx=h\int_0^1 u(x_i-hx)\psi(1-x)~dx\ ,$$
and
$$(u,\ell_{i+1})_{I_{i+1}}=h\int_0^1 u(x_i+hx)\psi(x)~dx,\quad (u,\ell_{i-1})_{I_i}=h\int_0^1 u(x_i-hx)\psi(x)~dx\ .$$
Substituting these inner products into (\ref{eq:innerprod2}), leads to
\begin{equation}\label{eq:innerprod3}
(\pi_2u-u,\ell_i')=\int_0^1 \left[u(x_i+hx)-u(x_i-hx) \right]\left[\psi'(1)\psi(1-x)+\psi'(0)\psi(x) \right]~dx\ .
\end{equation}
Note that from the properties (\ref{eq:vi}), (\ref{eq:iv}) and (\ref{eq:vii}) of Lemma \ref{lem:usefl} for $\psi$ we have that
$$\int_0^1 x^j\left[\psi'(1)\psi(1-x)+\psi'(0)\psi(x)\right]~dx=\frac{1}{k+1},\quad j=1,2,\dots,k\ .$$

Using Taylor expansions of $u(x_i+hx)$ and $u(x_i-hx)$ about $x_i$, we obtain
$$(\pi_2u-u,\ell_i')=\frac{1}{k+1}\left(2hu_i'+\frac{2h^3}{3!}u_i'''+\cdots+\frac{2h^k}{k!} u_i^{(k)}\right)+h^{k+1}R_{i,k}(h)\ ,$$
where $|R_{i,k}(h)|\leq C_k h \|u^{(k+2)}\|_\infty$, with $C_k>0$ independent of $h$ and $u$. 
Similarly, using Taylor expansions for $u(x_i+h)$ and $u(x_i-h)$, we have 
$$(\pi_2u-u,\ell_i')=\frac{1}{k+1}\left(u(x_i+h)-u(x_i-h) \right)+h^{k+1}\widetilde{R}_{i,k}(h)\ ,$$
where $|\widetilde{R}_{i,k}(h)|\leq \widetilde{C}_k h \|u^{(k+2)}\|_\infty$ for some constant $\widetilde{C}_k>0$ independent of $h$ and $u$.
Combining this with (\ref{eq:threepone}) and (\ref{eq:threeptwo}) gives
\begin{equation}\label{eq:intermid}
(e_x,\ell_i)=\frac{1}{k+1}\left((d_{i+1}-u_{i+1})-(d_{i-1}-u_{i-1}) \right)-h^{k+1} \widetilde{R}_{i,k}(h)\ .
\end{equation}

Applying Lemma \ref{lem:errestpi1} with $v=v(x)=u(x+h)-u(x-h)$ and using (\ref{eq:intermid}), we then obtain
$$|(e_x,\ell_i)|\leq Ch ^{k+2}\|u^{(k+2)}\|_\infty\ .$$
Note that due to periodicity $(\pi_1v)(x)=(\pi_1u)(x+h)-(\pi_1u)(x-h)$.

For $(e_x,q_{i,m})$, we have
\begin{equation}\label{eq:qfirst}
(e_x,q_{i,m})=-(e,q_{i,m}')=-(\pi_1 u+\pi_2u -u,q_{i,m}')=-(\pi_1u,q_{i,m}')-(\pi_2u-u,q_{i,m}')\ .
\end{equation}
It suffices to consider $m=1$ and $m=k-1$, since each of the two inner products on the right-hand side of (\ref{eq:qfirst}) vanishes for $m=2,3,\dots,k-2$ and $k>3$ (see Remark \ref{rmk:qim}). Note that if $k=3$, then $m=1$ or $m=k-1=2$, and such cases are analyzed below.

Case $m=1$: Let
$p_i(x)=q_{i,1}'(x)-q_{i,1}'(x_{i-1}^+)\ell_{i-1}(x)$,  $x\in I_i$
so that $p_i(x_{i-1})=p_i(x_i)=0$. Then
$$\begin{aligned}
-(\pi_1u,q_{i,1}')-(\pi_2u-u,q_{i,1}') &=-q_{i,1}'(x_{i-1}^+)\left[(\pi_1 u,\ell_{i-1})_{I_i}+(\pi_2u-u,\ell_{i-1})_{I_i} \right]\\
&=-q_{i,1}'(x_{i-1}^+)(\pi_1u-u,\ell_{i-1})_{I_i}\ ,
\end{aligned}$$
which implies
$(e_x,q_{i,1})=q_{i,1}'(x_{i-1}^+)(\pi_1 u-u,\ell_{i-1})_{I_{i}}$. 
Note again that in this notation 
$q_{i,\nu}'(x_j^{\pm})=\lim_{x\to x_j^{\pm}}q_{i,\nu}'(x)$.
Using (\ref{eq:superapprox}) of Lemma \ref{lem:superapprox} and the fact that $q_{i,1}'(x_{i-1}^+)=1/h$, we obtain
$$|(e_x,q_{i,1})|\leq C_1 h^{k+2}\|u^{(k+2)}\|_\infty\ ,$$
for some constant $C_1>0$, independent of $h$ and $u$.

Case $m=k-1$: Similarly, we have 
$(e_x,q_{i,k-1})=-q_{i,k-1}'(x_i^-)(\pi_1 u-u,\ell_i)_{I_i}$.
Thus, from (\ref{eq:superapprox}) of Lemma \ref{lem:superapprox} and the fact that $q_{i,k-1}'(x_{i}^-)=-1/h$, we obtain
$$|(e_x,q_{i,k-1})|\leq C_2 h^{k+2}\|u^{(k+2)}\|_\infty\ ,$$
for some constant $C_2>0$, independent of $h$ and $u$, and the proof of (i) is complete.

For the case (ii), recall that for $k=1$, we have $\mathcal{S}_h^2=\{0\}$. Let $e=Pu-u$. Then $Pu=\sum_{i=1}^N d_i\ell_i$, and $P(e_x)=\sum_{i=1}^N\delta_i\ell_i$. Moreover,
$$\|P(e_x)\|^2=(e_x,P(e_x))=\sum_{i=1}^N \zeta_i\delta_i\ ,$$
where $\zeta_i=(e_x,\ell_i)$. In particular, similarly to (\ref{eq:threeptwo}), 
$$\zeta_i=((Pu)_x-u_x,\ell_i)=\frac{1}{2}(d_{i+1}-d_{i-1})-(u_x,\ell_i)\ .$$
Let $\bzeta$ denote the vector with entries $\zeta_i$. Then,
$$(\bA\bzeta)_i=\frac{1}{2}(u,\ell_{i+1}-\ell_{i-1})-\frac{h}{6}(u_x,4\ell_i+\ell_{i+1}+\ell_{i-1})=(\tfrac{1}{2}v-\tfrac{h}{6}w,\ell_i)\ ,$$
where $v(x)=u(x+h)-u(x-h)$ and $w(x)=4u_x(x)+u_x(x+h)+u_x(x-h)$. 
Using Taylor expansions of $u$ and $u_x$ about $x$, we can write
$$
v(x) =2hu_x(x)+\frac{h^3}{3}u^{(3)}(x)+h^5 r(x),\qquad 
w(x) = 6 u_x(x)+h^2 u^{(3)}(x)+h^4 \widetilde{r}(x)\ ,$$
where $|r(x)|+|\widetilde{r}(x)|\leq c\|u^{(5)}\|_\infty$, for some constant $c>0$ independent of $h$ and $u$. 
Consequently, $|(\bA\bzeta)_i|\leq \tilde{c} h^6\|u^{(5)}\|_\infty$,
for some constant $\tilde{c}>0$, independent of $h$ and $u$. The last estimate together with (\ref{eq:twoptwo}) and (\ref{eq:twopone}) then yields the desired estimate (\ref{eq:oneptwo}).
\end{proof}
Repeating the preceding construction for even values of $k$, one obtains that the estimate (\ref{eq:standard}) cannot be improved.
%
%
\begin{remark}
The previous results hold exactly the same if one assumes that $v\in W_p^{r,\infty}$ instead of $v\in C_p^{r}$.
\end{remark}

\subsection{Other approximation properties of $\mathcal{V}_h^k$}

We close this section by reviewing some approximation properties and an inverse inequality that will be used in the error estimates of the subsequent sections.

Let $k\geq 1$ be an integer, and let $\cup_{i=1}^{N}\{q_{i,1},q_{i,2},\dots, q_{i,k-1}\}$ be a basis of $\mathcal{S}_h^2$. We define the operator $I_h:C_p\to \mathcal{V}_h^k$ by 
$$I_hv=\sum_{i=1}^N\left[v(x_i)\ell_i+\sum_{m=1}^{k-1} c_{i,m} q_{i,m} \right]\ ,$$
where the coefficients $c_{i,1},c_{i,2},\dots,c_{i,k-1}$ are chosen such that, on each interval $\bar{I}_i=[x_{i-1},x_i]$, the function $I_h v$ is the interpolating polynomial of $v$ at the points $x_{i-1}+j\cdot h/k$, $j=0,1,\dots,k$. 
For the interpolation error we have the following estimate, and we provide an elegant proof for the sake of completeness:
\begin{proposition}
Let $u\in H_p^{k+1}$. Then
\begin{equation}\label{eq:appropro}
    \|u-I_h u\|+h\|(u-I_hu)_x\|\leq h^{k+1}\|u^{(k+1)}\|\ .
\end{equation}
\end{proposition}
\begin{proof}
Let $r_h(x)=u(x)-(I_hu)(x)$. Then $r_h$ has at least $k+1$ distinct roots in each interval $\bar{I}_i$. By applying Rolle's Theorem inductively, we obtain that $r_h^{(\nu)}$, for $\nu=1,2,\dots,k$ has $k+1-\nu$ distinct roots in $I_i$. In particular, for $\nu=k$, if $\xi_i$ is a root of $r_h^{(k)}$ in $I_i$, then for all $x\in\bar{I}_i$,
 $r_h^{(k)}(x)=\int_{\xi_i}^x r_h^{(k+1)}(y)~dy=\int_{\xi_i}^x u^{(k+1)}(y)~dy\ .$
By the Cauchy-Schwarz inequality,
$[r_h^{(k)}(x)]^2\leq h\int_{x_{i-1}}^{x_i}[u^{(k+1)}(y)]^2~dy\ .$ 
Integrating over $I_i$ gives $\|r_h^{(k)}\|^2_{L^2(I_i)}\leq h^2\|u^{(k+1)}\|^2_{L^2(I_i)}$.
Summing over $i=1,2,\dots, N$, yields $\|r_h^{(k)}\|\leq h\|u^{(k+1)}\|$.
Applying the same argument recursively, we obtain $\|r_h^{(n-1)}\|\leq h\|r_h^{(n)}\|$, for $n=k,k-1,\dots,1$, and the desired estimate follows.
\end{proof}
It is worth noting that there exists a constant $C$, depending on $k$ but independent of $h$, such that, for any $\chi\in\mathcal{V}_h^k$, 
\begin{equation}\label{eq:inverse}
\|\chi'\|\leq Ch^{-1}\|\chi\|\ .
\end{equation}
This is the standard inverse inequality for one-dimensional Lagrange finite element spaces, and its proof follows the same lines as in the classical references; cf., e.g., \cite{EG2004}.
Finally, using the previous estimates for the interpolant and the inverse inequality, we obtain the classical error estimate $\|u-Pu\|+h\|(u-Pu)_x\|\leq C h^{k+1}\|u^{(k+1)}\|$ for any $u\in H_p^{k+1}$. For more information, we refer to \cite{EG2004}.

\subsection{Experimental validation}

We experimentally validate the Theorem \ref{thm:main1} using the function $u(x)=\sin(2\pi x)$ on the interval $[0,1]$. The interval was discretized into $N(m)$  subintervals of length $h = 1/N(m)$ for $m=1,2,\dots,5$, and the error $E_m^k=\|P[(Pu-u)_x]\|$ was computed for $k=1,2,\dots, 7$ using the FEniCS library of Python \cite{Fenics}. The experimental convergence rate $r_m$ approximating the exponent $r$ of the inequality $\|P[(Pu-u)x]\|\leq Ch^r$ was estimated such that 
$$r_m=\frac{\log_{10}(E_{m-1}^k/E_m^k)}{\log_{10}(N(m)/N(m-1))}\ .$$

The results presented in Table \ref{tab:exval1} confirm the theoretical predictions. In particular, they show that the convergence rate is $4$ for piecewise linear elements ($k=1$), $k+1$  for odd values of $k>1$, and $k$ for even values of $k>1$. Different values of $N(m)$ were used for different values of $k$ because, as $k$ increases, the errors become extremely small and are affected by finite-precision effects, which make the estimation of the convergence rate inaccurate. For this reason, the experiments are also limited to $k \le 7$.

\begin{table}[ht!]\label{tab:exval1}
\begin{tabular}{lllll|lll|ll}
\hline
$h$ & $k=1$ & $k=2$ & $k=3$ & $k=4$ & $h$ & $k=5$ & $k=6$ & $h$ & $k=7$ \\ \hline
0.1 & -- & -- & -- & -- & 0.2 & -- & -- & 0.5 & -- \\
0.05 & 4.051 & 1.755 & 4.147 & 3.844 & 0.1 & 6.243 & 5.526 & 0.25 & 8.442 \\ 
0.02 & 4.011 & 1.946 & 4.034 & 3.966 & 0.066 & 6.095 & 5.842 & 0.125 & 8.206 \\
0.01 & 4.002 & 1.990 & 4.007 & 3.994 & 0.05 & 6.049 & 5.921 & 0.1 & 8.105 \\ 
0.005 & 4.001 & 1.997 & 4.002 & 3.998 & 0.04 & 6.030 & 5.953 & 0.05 & 8.040 \\\hline
theory & 4 & 2 & 4 & 4 & theory & 6 & 6 & theory & 8  \\ \hline
\end{tabular}
\caption{Convergence rates of the error $E_m^k=\|P[(Pu-u)_x]\|$ (periodic boundary conditions)}
\end{table}

\subsection{Other approximation properties of $\mathcal{V}_h^k$}

We close this section by reviewing some approximation properties and an inverse inequality that will be used in the error estimates of the subsequent sections.

Let $k\geq 1$ be an integer, and let $\cup_{i=1}^{N}\{q_{i,1},q_{i,2},\dots, q_{i,k-1}\}$ be a basis of $\mathcal{S}_h^2$. We define the operator $I_h:C_p\to \mathcal{V}_h^k$ by 
$$I_hv=\sum_{i=1}^N\left[v(x_i)\ell_i+\sum_{m=1}^{k-1} c_{i,m} q_{i,m} \right]\ ,$$
where the coefficients $c_{i,1},c_{i,2},\dots,c_{i,k-1}$ are chosen such that, on each interval $\bar{I}_i=[x_{i-1},x_i]$, the function $I_h v$ is the interpolating polynomial of $v$ at the points $x_{i-1}+j\cdot h/k$, $j=0,1,\dots,k$. For the interpolation error we have the following estimate, and we provide an elegant proof for the sake of completeness:
\begin{proposition}
Let $u\in H_p^{k+1}$. Then
\begin{equation}\label{eq:appropro}
    \|u-I_h u\|+h\|(u-I_hu)_x\|\leq h^{k+1}\|u^{(k+1)}\|\ .
\end{equation}
\end{proposition}
\begin{proof}
Let $r_h(x)=u(x)-(I_hu)(x)$. Then $r_h$ has at least $k+1$ distinct roots in each interval $\bar{I}_i$. By applying Rolle's Theorem inductively, we obtain that $r_h^{(\nu)}$, for $\nu=1,2,\dots,k$ has $k+1-\nu$ distinct roots in $I_i$. In particular, for $\nu=k$, if $\xi_i$ is a root of $r_h^{(k)}$ in $I_i$, then for all $x\in\bar{I}_i$,
 $r_h^{(k)}(x)=\int_{\xi_i}^x r_h^{(k+1)}(y)~dy=\int_{\xi_i}^x u^{(k+1)}(y)~dy\ .$
By Cauchy-Schwarz inequality,
$[r_h^{(k)}(x)]^2\leq h\int_{x_{i-1}}^{x_i}[u^{(k+1)}(y)]^2~dy\ .$ 
Integrating over $I_i$ gives $\|r_h^{(k)}\|^2_{L^2(I_i)}\leq h^2\|u^{(k+1)}\|^2$.
Summing over $i=1,2,\dots, N$, yields $\|r_h^{(k)}\|\leq h\|u^{(k+1)}\|$.
Applying the same argument recursively, we obtain $\|r_h^{(n-1)}\|\leq h\|r_h^{(n)}\|$, for $n=k,k-1,\dots,1$, and the desired estimate follows.
\end{proof}
It is worth noting that, for any $\chi\in\mathcal{V}_h^k$, there exists a constant $C$, depending on $k$ and independent of $h$, such that 
\begin{equation}\label{eq:inverse}
\|\chi'\|\leq Ch^{-1}\|\chi\|\ .
\end{equation}
This is the standard inverse inequality for one-dimensional Lagrange finite element spaces, and its proof follows the same lines as in the classical references cf. eg. \cite{EG2004}.
Finally, using the previous estimates for the interpolant and the inverse inequality, we obtain the classical error estimate $\|u-Pu\|+h\|(u-Pu)_x\|\leq C h^{k+1}\|u^{(k+1)}\|$ for any $u\in H_p^{k+1}$. For more information, we refer to \cite{EG2004}.

\section{Application to a conservative finite element method}\label{sec:galerkin}

We consider the initial-periodic boundary value problem for the RLW equation \cite{BD1980}:
\begin{equation}\label{eq:ipbvp}
\begin{aligned}
&u_t+u_x+uu_x-u_{xxt}= 0 \quad\text{for $x\in (a,b)$ and $t>0$}\ ,\\
& \partial^i_x u(a,t)=\partial^i_x u(b,t) \quad \text{ for $t\geq 0$ and $i=0,1,\dots$}\ ,\\
& u(x,0)=U_0(x)\quad \text{for $x\in[a,b]$}\ .
\end{aligned}
\end{equation}
This problem effectively mimics the Cauchy problem on $\mathbb{R}$ and is therefore particularly suitable for studying long-time propagation phenomena, including the interaction of solitary waves and the resolution properties of arbitrary initial conditions.

Any sufficiently smooth solution $u=u(x,t)$ of (\ref{eq:ipbvp}) satisfies the following conservation laws:
\begin{align}
&\frac{d}{dt}\mathcal{M}(t;u)=0, \quad\text{where}\quad\mathcal{M}(t;u)=\int_a^b u~dx\qquad \text{(Mass)} \ , \label{eq:massc} \\
&\frac{d}{dt}\mathcal{I}(t;u)=0,\quad \text{where} \quad \mathcal{I}(t;u)=\frac{1}{2}\int_a^b u^2+u_x^2~dx\qquad \text{(Impulse)}\ . \label{eq:momentumc} \\
&\frac{d}{dt}\mathcal{E}(t;u)=0,\quad \text{where} \quad \mathcal{E}(t;u)=\frac{1}{2}\int_a^b u^2+\frac{1}{3}u^3~dx\qquad \text{(Energy)}\ . \label{eq:energyc} 
\end{align}
Note that these are the only independent conservation laws of (\ref{eq:ipbvp}), cf. \cite{Olver1979}. 
It is apparent that the RLW equation is a Hamiltonian partial differential equation, with the Hamiltonian functional given by $\mathcal{E}$. In this framework, it can be expressed in the canonical form 
\begin{equation}\label{eq:hamilton}
u_t=-\mathcal{D}\frac{\delta\mathcal{E}}{\delta u}\ ,   
\end{equation}
where $\mathcal{D}=(I-\partial_{xx})^{-1}\partial_x$ is a skew-adjoint operator, and $\delta\mathcal{E}/\delta u$ denotes the Gateaux derivative of the Hamiltonian $\mathcal{E}$.

\subsection{A conservative finite element semi-discretization}

While the following analysis also holds on any interval $(a,b)$, we focus again on the simpler case $\bar{I}=[a,b]=[0,1]$, and use a uniform grid as in the preceding sections. 

A modified Galerkin/finite element semi-discretization of (\ref{eq:ipbvp}) is defined as follows: Determine $\tu:[0,T]\to \mathcal{V}_h^k$ for $k=1,2,\dots$,  such that 
\begin{equation}\label{eq:semid}
(\tu_t,\chi)+(P[\tu_{xt}],\chi_x)=(P[\tu+\tfrac{1}{2}\tu^2],\chi_x)\quad \text{for all $\chi\in \mathcal{V}_h^k$}\ ,
\end{equation}
with initial condition $\tilde{u}(x,0)=P[U_0(x)]$ and $T>0$ the maximal time of existence of the analytical solution $u$ of (\ref{eq:ipbvp}). This semi-discretization is energy-conserva\-tive, and we have the following:
\begin{proposition}\label{prop:conserv}
 Any solution $\tu(\cdot,t)\in \mathcal{V}_h^k$ of the semi-discretization (\ref{eq:semid}) of the initial-periodic boundary value problem (\ref{eq:ipbvp}) conserves the mass and energy functionals in the sense that
 $$\frac{d}{dt}\mathcal{M}(t;\tu)=0\quad \text{and}\quad \frac{d}{dt}\mathcal{E}(t;\tu)=0\ .$$
\end{proposition}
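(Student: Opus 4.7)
The plan is to test the semi-discrete equation (\ref{eq:semid}) with two carefully chosen members of $\mathcal{V}_h^k$, one giving mass conservation and one giving energy conservation.

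For mass, I would take $\chi\equiv 1$, which lies in $\mathcal{V}_h^k$ because constants are continuous, periodic, piecewise polynomials. Since $\chi_x\equiv 0$, both terms containing $\chi_x$ drop out and the scheme reduces to $(\tu_t,1)=0$, which is precisely $\frac{d}{dt}\mathcal{M}(t;\tu)=0$.

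For energy, the natural candidate $\chi=\tu+\tfrac12\tu^2$ is unavailable because $\tu^2\notin\mathcal{V}_h^k$. The insertion of $P$ on both sides of (\ref{eq:semid}) was designed precisely so that the alternative $\chi=P[\tu+\tfrac12\tu^2-\tu_{xt}]\in\mathcal{V}_h^k$ works. Substituting this $\chi$ and bringing the right-hand side of (\ref{eq:semid}) over to the left, the two $\chi_x$-terms combine into $(P[\tu+\tfrac12\tu^2-\tu_{xt}],\chi_x)=(\chi,\chi_x)=\tfrac12\int(\chi^2)_x\,dx$, which vanishes by periodicity. The equation therefore collapses to $(\tu_t,P[\tu+\tfrac12\tu^2-\tu_{xt}])=0$.

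To finish, I would use that $\tu_t\in\mathcal{V}_h^k$ (hence $P\tu_t=\tu_t$) together with the self-adjointness of $P$ to peel off the outer projection, obtaining $(\tu_t,\tu+\tfrac12\tu^2)-(\tu_t,\tu_{xt})=0$. The mixed term is $(\tu_t,\tu_{xt})=\tfrac12\int(\tu_t^2)_x\,dx$, which vanishes once more by periodicity, since $\tu_t\in\mathcal{V}_h^k\subset H^1_p$. What remains is $(\tu_t,\tu+\tfrac12\tu^2)=\frac{d}{dt}\mathcal{E}(t;\tu)=0$, as desired. The only non-routine step is spotting the test function $\chi=P[\tu+\tfrac12\tu^2-\tu_{xt}]$; after that, the proof is a bookkeeping exercise with the $L^2$-projection and periodic integration by parts. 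Notably, the dichotomy results of Section \ref{sec:dichotomy} do not enter this proof; they become essential only in the subsequent error analysis.
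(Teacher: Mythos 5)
Your proposal is correct and follows essentially the same route as the paper: $\chi=1$ for mass, and the test function $\chi=P[\tu+\tfrac12\tu^2-\tu_{xt}]$ for energy, after which $(P[R],P[R]_x)=0$ by periodicity and $(\tu_t,\tu_{xt})=\tfrac12\int(\tu_t^2)_x\,dx=0$ finish the argument exactly as in the paper's proof of Proposition \ref{prop:conserv}.
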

\begin{proof}
The mass conservation follows directly from (\ref{eq:semid}) by choosing $\chi=1$.
To prove energy conservation, rewrite (\ref{eq:semid}) as
\begin{equation}\label{eq:first}
(\tu_t,\chi)=(P[\tu+\tfrac{1}{2}\tu^2-\tu_{xt}],\chi_x)\quad \text{for all $\chi\in S_h$}\ . 
\end{equation}
Denote $R=\tu+\tfrac{1}{2}\tu^2-\tu_{xt}$ and set $\chi=P[R]$ in (\ref{eq:first}). Due to the periodic boundary conditions, we then have
\begin{equation}\label{eq:zero}
(\tu_t,R) = (\tu_t,P[R])=(P[R],P[R]_x) =0\ .
\end{equation}
On the other hand,
$$
\begin{aligned}
(\tu_t,R) &= (\tu_t,\tu+\tfrac{1}{2} \tu^2-\tu_{xt})=\tfrac{1}{2}\int_0^1(\tu^2+\tfrac{1}{3}\tu^3)_t-(\tu_t^2)_x~dx=\tfrac{1}{2}\frac{d}{dt}\int_0^1 \tu^2+\tfrac{1}{3}\tu^3~dx=\frac{d}{dt}\mathcal{E}(t;\tu)\ .
\end{aligned}
$$
Combining this with (\ref{eq:zero}) yields the desired energy conservation.
\end{proof}

The semi-discretization (\ref{eq:semid}) can also be expressed in the form:
\begin{alignat}{2}
(\tu_t,\chi)+(\tw_t,\chi_x)&=(\tz,\chi_x) & & \quad \text{for all $\chi\in \mathcal{V}_h^k$}\ ,\label{eq:mixfem1}\\
(\tu_{xt},\psi)-(\tw_t,\psi) &= 0 & & \quad \text{for all $\psi\in \mathcal{V}_h^k$}\ ,\label{eq:mixfem2}\\
(\tz,\phi)&=(\tu+\tfrac{1}{2}\tu^2,\phi)& & \quad \text{for all $\phi\in \mathcal{V}_h^k$}\ .\label{eq:mixfem3}
\end{alignat}
In analogy, we can write the RLW equation as
\begin{align}
& u_t-w_{xt}=-z_x,\label{eq:bbmn1}\\
& w = u_x, \label{eq:bbmn2}\\
& z = u+\tfrac{1}{2}u^2\ . \label{eq:bbmn3}
\end{align}
Note that if $u(x,0)=U_0(x)$, then naturally $w(x,0)=W_0(x)=U_0'(x)$. 
We therefore consider as initial conditions of the semi-discrete system (\ref{eq:mixfem1})--(\ref{eq:mixfem3}) the projections
\begin{equation}\label{eq:ics}
\tilde{u}(x,0)=PU_0(x)\quad \text{and}\quad  \tilde{w}(x,0)=PW_0(x)=P[U_0'](x)\ ,
\end{equation}
for all $x\in[0,1]$.

\begin{remark}\label{rmk:doubleu}
From (\ref{eq:mixfem2}), we have $\tilde{w}_t={P[\tilde{u}_x]}_t$. Integrating in time over any interval $[0,t]$, gives  $\tilde{w}=P[\tilde{u}_x]+\tilde{w}(x,0)-P[U_0']=P[\tilde{u}_x]+PW_0-P[U_0']$. Using the initial condition (\ref{eq:ics}), we immediately obtain 
\begin{equation}\label{eq:doubleu}
\tilde{w}=P[\tilde{u}_x]\ .
\end{equation}
\end{remark}

\subsection{Proof of convergence}

In this section, we prove that the semi-discrete solution $\tilde{u}$ exists and converges to the classical solution with optimal order when $k\geq 1$ is an odd integer for all $t\in[0,T]$.

\begin{theorem}\label{thrm:Theorem1}
Let $u\in C([0,T];C^{k+2}_p)$ be the unique solution of the initial-periodic boundary value problem (\ref{eq:ipbvp}), and let $k=2\kappa+1$ for $\kappa=0,1,\dots$. Then, for sufficiently small $h>0$, there exists a unique solution $\tilde{u}(\cdot,t)\in \mathcal{V}_h^k$ of the semi-discrete problem (\ref{eq:semid}) (and equivalently of (\ref{eq:mixfem1})--(\ref{eq:mixfem3}))  for all $t\in[0,T]$, and a constant $C>0$ independent of $h$ such that
$$\|u-\tilde{u}\|\leq C h^{k+1}, \quad \|u_x-P[\tilde{u}_x]\|\leq C h^{k+1}\quad \text{and}\quad \|u_x-\tilde{u}_x\|\leq C h^k \ .$$
\end{theorem}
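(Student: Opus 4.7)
The strategy is the standard split $u-\tilde{u}=\rho+\theta$, with $\rho:=u-Pu$ the projection error (for which $\|\rho\|+\|\rho_t\|\leq Ch^{k+1}$ and $\|\rho_x\|\leq Ch^{k}$) and $\theta:=Pu-\tilde{u}\in\mathcal{V}_h^k$ the discrete error to be controlled by an energy argument. Since $\tilde{u}(\cdot,0)=PU_0$, we have $\theta(\cdot,0)=0$. Local-in-time existence of $\tilde{u}$ follows from ODE theory applied to the finite-dimensional system (\ref{eq:mixfem1})--(\ref{eq:mixfem3}); the extension to $[0,T]$ will emerge together with the a priori bound through a bootstrap on $\|\tilde{u}\|_\infty$ using the inverse inequality (\ref{eq:inverse}).

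Subtracting (\ref{eq:semid}) from the weak form of (\ref{eq:ipbvp}) tested against $\chi\in\mathcal{V}_h^k$, and using $P\rho=0$, I would rewrite the three relevant combinations as $u-P\tilde{u}=\rho+\theta$, $u_x-P[\tilde{u}_x]=(I-P)u_x+P[\rho_x]+P[\theta_x]$, and $u^2-P[\tilde{u}^2]=(I-P)u^2+P[e(u+\tilde{u})]$ with $e=u-\tilde{u}$. Setting $\chi=\theta$, the left-hand side produces $(\theta_t,\theta)=\tfrac{1}{2}(d/dt)\|\theta\|^2$ and, crucially, $(P[\theta_{xt}],\theta_x)=(P[\theta_{xt}],P[\theta_x])=\tfrac{1}{2}(d/dt)\|P[\theta_x]\|^2$ (the first equality holds because $(I-P)\theta_x\perp\mathcal{V}_h^k$), delivering the energy quantity $\Theta(t):=\|\theta\|^2+\|P[\theta_x]\|^2$ to be estimated.

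The heart of the argument is bounding each right-hand-side term at the optimal rate $O(h^{k+1})$, and this is where Theorem \ref{thm:main1} is decisive. For any smooth $f$, since $Pf\in\mathcal{V}_h^k\subset C_p$ the function $(I-P)f$ is continuous, so piecewise integration by parts together with $\theta=P\theta$ and self-adjointness of $P$ gives
\begin{equation*}
((I-P)f,\theta_x)=-\bigl(((I-P)f)_x,\theta\bigr)=-\bigl(P[((I-P)f)_x],\theta\bigr),
\end{equation*}
which is $O(h^{k+1})\|\theta\|$ by the super-approximation estimate (\ref{eq:onepone}) whenever $f\in C_p^{k+2}$, true for $f\in\{u_{xt},u^2\}$ since the classical solution inherits enough regularity from the equation. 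The same device with $f=u$ gives $(\rho,\theta_x)=-(P[\rho_x],\theta)=O(h^{k+1})\|\theta\|$, while $\|P[\rho_{xt}]\|=\|P[((I-P)u_t)_x]\|=O(h^{k+1})$ handles $(P[\rho_{xt}],\theta_x)=(P[\rho_{xt}],P[\theta_x])$ through Young's inequality (absorbing a multiple of $\|P[\theta_x]\|^2$). The nonlinear contribution $(P[e(u+\tilde{u})],\theta_x)=(P[e(u+\tilde{u})],P[\theta_x])$ is bounded by $C(\|\rho\|+\|\theta\|)\|u+\tilde{u}\|_\infty\|P[\theta_x]\|$ and absorbed similarly, under the provisional bound $\|\tilde{u}\|_\infty\leq\|u\|_\infty+1$.

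Applying Gronwall yields $\Theta(t)\leq Ch^{2(k+1)}$ on $[0,T]$; the bootstrap closes because $\|\theta\|_\infty\leq Ch^{-1/2}\|\theta\|=O(h^{k+1/2})\to 0$, which restores the provisional infinity bound for $h$ small and simultaneously provides global existence. The three claimed error estimates then follow by the triangle inequality: $\|u-\tilde{u}\|\leq\|\rho\|+\|\theta\|=O(h^{k+1})$; $\|u_x-P[\tilde{u}_x]\|\leq\|(I-P)u_x\|+\|P[\rho_x]\|+\|P[\theta_x]\|=O(h^{k+1})$, with $\|P[\rho_x]\|=O(h^{k+1})$ the critical super-approximation ingredient; and $\|u_x-\tilde{u}_x\|\leq\|\rho_x\|+\|\theta_x\|=O(h^k)$, the $\theta_x$ factor controlled only by the inverse inequality $\|\theta_x\|\leq Ch^{-1}\|\theta\|$. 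The principal obstacle is exactly the combination of the linear term $((I-P)u_{xt},\theta_x)$, the term $(\rho,\theta_x)$ and the nonlinear contribution: without Theorem \ref{thm:main1} each of these would only give $O(h^k)$, degrading $\|u-\tilde{u}\|$ to suboptimal order — this is precisely the odd/even dichotomy encoded in the statement.
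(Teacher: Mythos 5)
Your proposal is correct, and it rests on the same two pillars as the paper's argument: the splitting of the error into a projection part and a discrete part in $\mathcal{V}_h^k$, and the super-approximation estimate of Theorem \ref{thm:main1} to push every consistency term to $O(h^{k+1})$, followed by Gronwall, the inverse inequality for $\|u_x-\tilde{u}_x\|$, and an ODE continuation argument. The bookkeeping, however, is genuinely different. The paper routes everything through the mixed formulation (\ref{eq:mixfem1})--(\ref{eq:mixfem3}), introduces six error components $\zeta,\rho,\sigma,\theta,\tau,\xi$, derives the operator identity $\theta=P[\zeta_x+\rho_x]$ to serve as the second half of the energy, exploits an exact cancellation of the two $(\theta_t,P[\rho_x])$ terms, and absorbs the remaining cross term by writing $(\theta_t,P[\zeta_x])=\tfrac{d}{dt}(\theta,P[\zeta_x])-(\theta,P[(\zeta_t)_x])$ and integrating. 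You instead test the primal error equation with the discrete error and obtain the energy $\|\theta\|^2+\|P[\theta_x]\|^2$ directly (this coincides with the paper's $\|\rho\|^2+\|\theta\|^2$, in its notation, up to the $O(h^{k+1})$ shift $P[\zeta_x]$), and you dispose of all consistency terms with one device: piecewise integration by parts converting $((I-P)f,\theta_x)$ into $-\bigl(P[((I-P)f)_x],\theta\bigr)$, to which Theorem \ref{thm:main1} applies. This is arguably cleaner, at the modest price of invoking Theorem \ref{thm:main1} for $f=u_{xt}$ and $f=u^2$ rather than only for $u$ and $u_t$; the extra derivative on $u_t$ is supplied by the equation through $(I-\partial_x^2)^{-1}\partial_x$, which the paper must already use to get $u_t\in C_p^{k+2}$, so the hypotheses are not actually strengthened. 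Two details to make explicit in a write-up: record that $(\theta,\theta_x)=0$ disposes of the linear part of the flux, and note that the $L^\infty$ inverse estimate $\|\theta\|_\infty\le Ch^{-1/2}\|\theta\|$ used in your bootstrap is not the stated inequality (\ref{eq:inverse}) and needs its own (standard) justification --- your bootstrap is in fact more careful than the paper's, which invokes $\|u+\tilde{u}\|_\infty$ in the nonlinear estimate while only tracking $L^2$ bounds.
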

\begin{proof}
We define
$$\zeta=u-Pu,\quad \rho=Pu-\tilde{u},\quad \sigma=w-Pw,\quad \theta=Pw-\tilde{w}, \quad\tau=z-Pz,  \quad \xi=Pz-\tilde{z}\ .$$
Note that $\tilde{w}=P[\tilde{u}_x]$ and $\tilde{z}=P[\tilde{u}+\frac{1}{2}\tilde{u}^2]$. 
Throughout the analysis, we will use a generic positive constant $C$ independent of $h$.
We also assume that the exact solution is bounded, i.e., there is a constant $M$ such that $\|u\|+\|u_x\|\leq M$ for all $t\in[0,T]$. Then, for sufficiently small $h$, the semi-discrete solution satisfies $\|\tilde{u}(\cdot,0)\|+\|\tilde{w}(\cdot,0)\|\leq 2M$.

Upon choosing appropriate basis functions $\varphi_i(x)$, $i=1,2,\dots,kN$ for the finite element space $\mathcal{V}_h^k$, the semi-discrete solution can be represented as a linear combination of the basis functions:
$$\tilde{u}(x,t)=\sum_{i=1}^{kN} u_i(t) \varphi_i(x) \ \text{ and } \ \tilde{w}(x,t)=\sum_{i=1}^{kN} w_i(t)\varphi_i(x)\ .$$
Denote $\by$ the vector $\by=(\bu,\bw)^T$ where $\bu=(u_0,u_1,\dots)^T$ and $\bw=(w_0,w_1,\dots)^T$.
Then, the finite element system (\ref{eq:mixfem1})--(\ref{eq:mixfem3}) can be written as a system of nonlinear ordinary differential equations
\begin{equation}\label{eq:sysnlodes}
   \bM \by' = \bF(\by)\ ,
\end{equation}
where 
$$\bM = \begin{pmatrix}
    \bA & \bB\\
    \bB & \bA
\end{pmatrix}\ ,$$
with $\bA_{ij}=(\varphi_i,\varphi_j)$, $\bB_{ij}=(\varphi_i,\varphi_j')$, and
$\bF=(\bz,\bo)$ with $\bz_j=(\tz,\varphi_j')$. Here, $\bA$ is symmetric and positive definite, while $\bB$ is skew-symmetric. Moreover, for any vector $\bx=(\bx_1, \bx_2)^T$ we have
$$
 \bx^T\bM \bx = \bx_1^T\bA\bx_1+\bx_1^T\bB\bx_2+\bx_2^T\bB\bx_1+\bx_2^T\bA\bx_1> \bx_1^T\bB\bx_2+\bx_1^T(\bx_2^T\bB)^T= 0\ ,
$$
which shows that $\bM$ is positive definite and therefore invertible.

For the system $\by'=\bM^{-1}\bF(\by)$, the mapping $\bM^{-1}\bF$ is locally Lipschitz as well as $\bF$. Thus, there is a $t^\ast\leq T$ such that the system (\ref{eq:mixfem1})--(\ref{eq:mixfem3}) has a unique solution $(\tilde{u}, \tilde{w})$ satisfying
$\|\tilde{u}\|+\|\tilde{w}\|\leq \tilde{M}$ for all $t\in [0,t^\ast]$, for some constant $\tilde{M}>0$ independent of $h$.

We define $\omega_w=P[w_{xt}]-w_{xt}$, $\omega_z=P[z_x]-z_x$ and $\omega_u=P[u^2]-u^2$. Then, for $\chi\in\mathcal{V}_h^k$, we have 
$$
\begin{aligned}
    (\rho_t,\chi) &= (P[u_t],\chi)-(\tilde{u}_t,\chi)= (P[w_x]_t-P [z_x],\chi) + (\tilde{w}_t-\tilde{z},\chi_x)\\
 &=(P[w_{xt}]-w_{xt},\chi)+(w_{xt},\chi)+(\tilde{w}_t,\chi_x)-(P[z_x]-z_x,\chi)-(z_x,\chi)-(\tilde{z},\chi_x)\\
 &=(\omega_w,\chi)-(w_t-P[w_t],\chi_x)-(P[w_t]-\tilde{w}_t,\chi_x)-(\omega_z,\chi)+(z-Pz,\chi_x)+(Pz-\tilde{z},\chi_x)\\
 &=(P[\omega_w],\chi)+(P[{(\sigma_t)}_x],\chi)+(P[{(\theta_t)}_x],\chi)-(P[\omega_z],\chi)-(P[\tau_x],\chi)-(P[\xi_x],\chi)\ .
\end{aligned}
$$
Equivalently, in operator form, this gives
\begin{equation}\label{eq:rhot}
\rho_t=P[\omega_w+(\sigma_t)_x+(\theta_t)_x-\omega_z-\tau_x-\xi_x]\ .
\end{equation}

From (\ref{eq:mixfem2}) and (\ref{eq:bbmn2}), we have that for any $\psi\in\mathcal{V}_h^k$
$$
\begin{aligned}
(\theta,\psi)&=(Pw-\tilde{w},\psi) = (w-\tilde{w},\psi)= (u_x-P[\tilde{u}_x],\psi)
= -(u-\tilde{u},\psi_x)\\
&= -(u-Pu,\psi_x)-(Pu-\tilde{u},\psi_x)
=  -(\zeta,\psi_x)-(\rho,\psi_x)=  (P[\zeta_x],\psi)+(P[\rho_x],\psi)\ .
\end{aligned}$$
Hence, we obtain the identity
\begin{equation}\label{eq:thetaP}
\theta=P[\zeta_x+\rho_x]\ .
\end{equation}
Using the last formula, we have
$$
\|P[\rho_x]\| =\|\theta-P[\zeta_x]\|\leq \|\theta\|+\|P[\zeta_x]\|\ .
$$
According to Theorem \ref{thm:main1}, we have $\|P[\zeta_x]\|\leq Ch^{k+1}$. (If $k=1$ and $u\in C_p^5$, the exponent in the last estimate is $4$. However, this won’t improve the error estimate because, for instance, (\ref{eq:xiineq}) below). 
Using this estimate, we obtain
\begin{equation}\label{eq:Prhox}
\|P[\rho_x]\|\leq Ch^{k+1}+\|\theta\|\ .
\end{equation}

Let $\phi\in\mathcal{V}_h^k$. Then,
$$
\begin{aligned}
(\xi,\phi) &= (Pz-\tilde{z},\phi)= (Pu-\tilde{u},\phi)+\tfrac{1}{2}(P[u^2]-\tilde{u}^2,\phi)= (\rho,\phi)+\tfrac{1}{2}(P[u^2]-u^2,\phi)+\tfrac{1}{2}(u^2-\tilde{u}^2,\phi)\\
&= (\rho,\phi)+\tfrac{1}{2}(P[u^2]-u^2,\phi)+\tfrac{1}{2}((u+\tilde{u})(u-\tilde{u}),\phi)= (\rho,\phi)+\tfrac{1}{2}(\omega_u,\phi)+\tfrac{1}{2}((u+\tilde{u})(\zeta+\rho),\phi)\ .
\end{aligned}
$$
Taking $\phi=\xi$ in the previous relation yields
$$\|\xi\|^2\leq \|\rho\|\|\xi\|+C h^{k+1}\|\xi\|+\|u+\tilde{u}\|_\infty (C h^{k+1}+\|\rho\|)\|\xi\|\ ,$$
which can be simplified to
\begin{equation}\label{eq:xiineq}
\|\xi\|\leq C(h^{k+1}+\|\rho\|)\ .
\end{equation}
Now, we can proceed with the main estimate. From (\ref{eq:rhot}) and (\ref{eq:thetaP}) we have
$$
\begin{aligned}
(\rho_t,\rho)+(\theta_t,\theta) &= (P[\omega_w+(\sigma_t)_x+(\theta_t)_x-\omega_z-\tau_x-\xi_x],\rho)+
(\theta_t, P[\zeta_x+\rho_x]) \\
&=(\omega_w-\omega_z,\rho)+(P[(\sigma_t-\tau)_x],\rho)-(\theta_t,P[\rho_x])-(\xi_x,\rho)+(\theta_t,P[\zeta_x])+(\theta_t,P[\rho_x]) \\
&=(\omega_w-\omega_z,\rho)+(P[(\sigma_t-\tau)_x],\rho)-(\xi_x,\rho)+(\theta_t,P[\zeta_x]) \\
&=(\omega_w-\omega_z,\rho)+(P[(\sigma_t-\tau)_x],\rho)+(\xi,P[\rho_x])+\tfrac{d}{dt}(\theta,P[\zeta_x])-(\theta,P[(\zeta_t)_x])
\ .
\end{aligned}
$$
The last term in the previous inequality can be bounded using the Cauchy-Schwarz inequality: 
$$
(\theta,P[(\zeta_t)_x])\leq \frac{1}{8}\|\theta\|^2+2\|P[(\zeta_t)_x]\|^2\leq C(\|P[(\zeta_t)_x]\|^2+\|\theta\|^2)\ .
$$
By Theorem \ref{thm:main1}, $\|P[(\zeta_t)_x]\|\leq C h^{k+1}$. Therefore,
\begin{equation}\label{eq:estim1}
(\theta,P[(\zeta_t)_x])\leq C( h^{2(k+1)} +\|\theta\|^2) \  .
\end{equation}
Similarly, using the inequalities (\ref{eq:xiineq}) and (\ref{eq:Prhox}), we obtain
\begin{equation}\label{eq:estim2}
(\xi,P[\rho_x]) \leq C(h^{2(k+1)}+\|\xi\|^2+\|\theta\|^2)\leq C(h^{2(k+1)} +\|\rho\|^2+\|\theta\|^2)\ .
\end{equation}
Thus,
\begin{equation}
\frac{1}{2}\frac{d}{dt}(\|\rho\|^2+\|\theta\|^2) \leq C(h^{2(k+1)}+\|\rho\|^2+\|\theta\|^2)+\tfrac{d}{dt}(\theta,P[\zeta_x]) \ .   
\end{equation}
Integrating this relation yields 
\begin{equation}\label{eq:lrel}
\|\rho\|^2+\|\theta\|^2\leq C h^{2(k+1)} + \frac{1}{8}\|\theta\|^2+2\|P[\zeta_x]\|^2 + C\int_0^t (\|\rho\|^2+\|\theta^2\|)\ ,
\end{equation}
where, from now on, $C$ depends on $t^\ast$.
Taking into account that $\|P[\zeta_x]\|\leq C h^{k+1}$, we can simplify (\ref{eq:lrel}) to
\begin{equation}
\|\rho\|^2+\|\theta\|^2\leq Ch^{2(k+1)}+C\int_0^t (\|\rho\|^2+\|\theta^2\|)\ .
\end{equation}
Applying Gronwall's inequality, we obtain the optimal error estimate
\begin{equation}\label{eq:help2}
\|\rho\|^2+\|\theta\|^2\leq Ch^{2(k+1)}\ ,
\end{equation}
for all $t\in [0,t^\ast]$, which immediately gives
\begin{equation}
\|u-\tilde{u}\|^2+\|u_x-P[\tilde{u}_x]\|^2\leq Ch^{2(k+1)}\ .
\end{equation}
Finally, using (\ref{eq:help2}) and the inverse inequality (\ref{eq:inverse}), we have 
$$\|u_x-\tilde{u}_x\|\leq \|(u-Pu)_x\|+\|\rho_x\|\leq C(h^k+h^{-1}\|\rho\|)\leq Ch^k\ .$$
Choosing $h>0$ sufficiently small such that $\|\tilde{u}\|\leq \|u\|+Ch^{k+1}<2M$ for $0\leq t\leq t^\ast$, 
we ensure that the numerical solution $\tilde{u}$ will remain bounded in $L^2$ up to $t=t^\ast$. Similarly, since $\|u_x-P[\tilde{u}_x]\|\leq Ch^{k+1}$, we have 
$\|\tilde{w}\|\leq \|u_x\|+Ch^{k+1}<2M$ for $0\leq t\leq t^\ast$.
By standard extension arguments from the theory of ordinary differential equations, we conclude that the solution (and the error estimates) can be extended to the full interval $t\in[0,T]$. 
\end{proof}

\begin{remark}\label{rmk:Theorem1}
If $k=2\kappa$ for $\kappa=1,2,\dots$, we have that $\|P[\varepsilon_x]\|\leq C h^{k}$, where $\varepsilon = \zeta$ or $\zeta_t$. Therefore, modifying the proof of Theorem \ref{thrm:Theorem1} accordingly, we have that 
$\|u-\tilde{u}\|\leq C h^{k}$, $\|u_x-P[\tilde{u}_x]\|\leq C h^{k}$ and $\|u_x-\tilde{u}_x\|\leq C h^{k-1}$.
\end{remark}

\begin{remark}
It was shown in \cite{AD2013} that if $u(\cdot,t)\in C^{5}_p$, then in the space of periodic cubic splines, $\|P[(v-I_h v)_x]\|=O(h^{k+1})$, where $I_h$ is the cubic spline interpolant. Consequently, the proof of Theorem \ref{thrm:Theorem1} can be modified to remain valid if we consider periodic cubic splines with small changes.
\end{remark}

Building on the preceding analysis, we show that the conservation of the impulse functional $\mathcal{I}$ for any smooth solution $\tilde{u}$ of (\ref{eq:mixfem1})--(\ref{eq:mixfem3}) is approximated with high accuracy.
\begin{proposition}\label{prop:impulsec}
If $\tilde{u}(\cdot,t)\in \mathcal{V}_h^k$ with $k=1,2,\dots$ be a solution of the initial value problem (\ref{eq:mixfem1})--(\ref{eq:mixfem3}), (\ref{eq:ics}), and let $h>0$ sufficiently small. Then, there exists a constant $C>0$, independent of $h$, such that
\begin{equation}\label{eq:approxprop}
\max_{t\in[0,T]}|\mathcal{I}(t;\tilde{u})-\mathcal{I}(0;\tilde{u})|\leq Ch^r\ ,
\end{equation}
where $r=2k$ if $k=2\kappa+1$ with $\kappa=0,1,\dots$, and $r=2k-2$ if $k=2\kappa$ with $\kappa=1,2,\dots$.
\end{proposition}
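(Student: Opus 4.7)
The strategy is to differentiate $\mathcal{I}(t;\tilde{u})$ along (\ref{eq:semid}), isolate a Pythagorean defect $\tfrac{1}{2}\|\tilde{u}_x-P[\tilde{u}_x]\|^2$ from the impulse, and reduce the remaining drift to an inner product of two projection errors that are controlled by Theorem \ref{thrm:Theorem1} and Remark \ref{rmk:Theorem1}.

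Setting $\chi=\tilde{u}$ in (\ref{eq:semid}), using that $P$ is self-adjoint and commutes with $\partial_t$, and noting that $(P[\tilde{u}_{xt}],\tilde{u}_x)=((P[\tilde{u}_x])_t,P[\tilde{u}_x])$ because $\tilde{u}_x-P[\tilde{u}_x]$ is $L^2$-orthogonal to $\mathcal{V}_h^k$, one arrives at
$$\tfrac{1}{2}\tfrac{d}{dt}\bigl(\|\tilde{u}\|^2+\|P[\tilde{u}_x]\|^2\bigr)=\bigl(\tilde{u}+\tfrac{1}{2}\tilde{u}^2,\,P[\tilde{u}_x]\bigr)\ .$$
Combined with the Pythagorean identity $\|\tilde{u}_x\|^2=\|P[\tilde{u}_x]\|^2+\|\tilde{u}_x-P[\tilde{u}_x]\|^2$, this yields
$$\tfrac{d}{dt}\mathcal{I}(t;\tilde{u})=\bigl(\tilde{u}+\tfrac{1}{2}\tilde{u}^2,\,P[\tilde{u}_x]\bigr)+\tfrac{1}{2}\tfrac{d}{dt}\|\tilde{u}_x-P[\tilde{u}_x]\|^2\ .$$
By periodicity $(\tilde{u}+\tfrac{1}{2}\tilde{u}^2,\tilde{u}_x)=0$, and since $P[\tilde{u}_x]-\tilde{u}_x$ is $L^2$-orthogonal to $\mathcal{V}_h^k$, the first term on the right equals
$$\bigl(\tilde{u}+\tfrac{1}{2}\tilde{u}^2-P[\tilde{u}+\tfrac{1}{2}\tilde{u}^2],\,P[\tilde{u}_x]-\tilde{u}_x\bigr)\ .$$
After integrating in time, it suffices to bound (i) the supremum of $\|\tilde{u}_x-P[\tilde{u}_x]\|^2$, and (ii) the time integral of the above inner product.

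For odd $k$, Theorem \ref{thrm:Theorem1} furnishes $\|u-\tilde{u}\|\le Ch^{k+1}$, $\|u_x-\tilde{u}_x\|\le Ch^k$ and $\|u_x-P[\tilde{u}_x]\|\le Ch^{k+1}$. Combining these with the inverse inequality (\ref{eq:inverse}) to obtain a uniform $L^\infty$ bound on $\tilde{u}$, and with the standard $L^2$-projection estimate applied to the smooth target $u+\tfrac{1}{2}u^2$, one arrives at $\|\tilde{u}+\tfrac{1}{2}\tilde{u}^2-P[\tilde{u}+\tfrac{1}{2}\tilde{u}^2]\|\le Ch^{k+1}$ together with $\|P[\tilde{u}_x]-\tilde{u}_x\|\le Ch^k$. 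Cauchy--Schwarz then bounds the integrand in (ii) by $Ch^{2k+1}$, while the Pythagorean defect in (i) contributes $O(h^{2k})$; adding both produces $|\mathcal{I}(t;\tilde{u})-\mathcal{I}(0;\tilde{u})|\le Ch^{2k}$. The even-$k$ case proceeds identically with the weaker rates of Remark \ref{rmk:Theorem1}, yielding the claimed $O(h^{2k-2})$.

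The main obstacle will be verifying that the projection error of the nonlinear quantity $\tilde{u}+\tfrac{1}{2}\tilde{u}^2$ actually inherits the optimal rate of the smooth target $u+\tfrac{1}{2}u^2$, which requires a uniform $L^\infty$-bound on $\tilde{u}$; this I would obtain from the inverse inequality applied to $\tilde{u}-Pu$ together with the $L^2$-error bound from Theorem \ref{thrm:Theorem1}. The clean decomposition of $\mathcal{I}$ into $\tfrac{1}{2}\|\tilde{u}\|^2+\tfrac{1}{2}\|P[\tilde{u}_x]\|^2$ plus the Pythagorean defect is the essential algebraic step: without it, differentiating $\mathcal{I}$ directly does not reveal the cancellation afforded by the $L^2$-orthogonality of $P[\tilde{u}_x]-\tilde{u}_x$ to $\mathcal{V}_h^k$.
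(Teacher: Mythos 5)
Your proposal is correct and follows essentially the same route as the paper: both derive the identity $\tfrac{d}{dt}\mathcal{I}(t;\tilde{u})=\tfrac{1}{2}\tfrac{d}{dt}\|P[\tilde{u}_x]-\tilde{u}_x\|^2+\tfrac{1}{2}(\tilde{u}^2-P[\tilde{u}^2],P[\tilde{u}_x]-\tilde{u}_x)$ (your first term coincides with the paper's since $P\tilde{u}=\tilde{u}$), and then integrate in time using the rates of Theorem \ref{thrm:Theorem1} and Remark \ref{rmk:Theorem1}. Your explicit attention to bounding $\|\tilde{u}^2-P[\tilde{u}^2]\|$ via an $L^\infty$ bound on $\tilde{u}$ is a detail the paper leaves implicit, and is a welcome addition.
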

\begin{proof} 
Taking $\chi=\tilde{u}$ and using (\ref{eq:doubleu}) in (\ref{eq:mixfem1}), gives
\begin{equation} \label{eq:almost1}
(\tilde{u}_t,\tilde{u})+({P[\tilde{u}_x]}_t,\tilde{u}_x)=(\tilde{z},\tilde{u}_x)=(P[\tilde{u}+\tfrac{1}{2}\tilde{u}^2],\tilde{u}_x)=\tfrac{1}{2}(P[\tilde{u}^2],\tilde{u}_x)=\tfrac{1}{2}(\tilde{u}^2,P[\tilde{u}_x])\ .
\end{equation}
Thus,
$$
(\tilde{u}_t,\tilde{u})+(\tilde{u}_{xt},\tilde{u}_x)+(P[\tilde{u}_{xt}]-\tilde{u}_{xt},\tilde{u}_x)=\tfrac{1}{2}(\tilde{u}^2,P[\tilde{u}_x]-\tilde{u}_x)\ .$$
Using the definition of the $L^2$-projection, we write the last relationship into the form 
$$\frac{d}{dt}\mathcal{I}(t;\tilde{u})-(P[\tilde{u}_{xt}]-\tilde{u}_{xt},P[\tilde{u}_x]-\tilde{u}_x)=\tfrac{1}{2}(\tilde{u}^2-P[\tilde{u}^2],P[\tilde{u}_x]-\tilde{u}_x)\ ,$$
which is equivalent to
\begin{equation}\label{eq:dtimpulse}
\frac{d}{dt}\mathcal{I}(t;\tilde{u})=\tfrac{1}{2}\frac{d}{dt}\|P[\tilde{u}_x]-\tilde{u}_x\|^2+\tfrac{1}{2}(\tilde{u}^2-P[\tilde{u}^2],P[\tilde{u}_x]-\tilde{u}_x)\ .
\end{equation}
Using Theorem \ref{thrm:Theorem1}, we have that $\|P[\tilde{u}_x]-\tilde{u}_x\|\leq \|P[\tilde{u}_x]-u_x\|+\|u_x-\tilde{u}_x\|\leq Ch^k$ if $k=2\kappa+1$ for $\kappa=0,1,\dots$ while $\|P[\tilde{u}_x]-\tilde{u}_x\|\leq Ch^{k-1}$ for $k=2\kappa$, $\kappa=1,2,\dots$. Therefore, after integration of (\ref{eq:dtimpulse}), and taking absolute values, we obtain
$$|\mathcal{I}(t,\tilde{u})-\mathcal{I}(0,\tilde{u})|\leq \tfrac{1}{2}\|P[\tilde{u}_x]-\tilde{u}_x\|^2+\tfrac{1}{4}T(\max_{t\in[0,T]}\|\tilde{u}^2-P[\tilde{u}^2]\|^2+\max_{t\in[0,T]}\|P[\tilde{u}_x]-\tilde{u}_x\|^2)\ ,$$
which completes the proof.
\end{proof}

Commenting on the standard Galerkin method applied to the RLW equation is known to be conservative in the sense that it preserves the impulse functional $\mathcal{I}$ but not the energy. In general, the standard Galerkin method has been used alongside the symplectic two-stage implicit Gauss--Legendre Runge--Kutta method, which is also known to preserve quadratic conservation laws due to its symplecticity. This combination has ensured conservative and highly accurate simulations for the KdV equation while studying blow-up phenomena \cite{BDKM1995}, as well as for the KdV--KdV system \cite{BDM2007i,BDM2008ii} in the study of generalized solitary waves. For the RLW equation, the standard Galerkin method has been studied theoretically in \cite{Wahlbin1974,ADT1981}, although its conservation properties were never the main focus.

\subsection{Experimental validation}

For the numerical validation of Theorem \ref{thrm:Theorem1} and Remark \ref{rmk:Theorem1}, we consider the periodic initial-value problem (\ref{eq:ipbvp}) with a suitable nonzero right-hand side so that the function $u(x,t) = e^t \sin(2\pi (x - 2t))$ on the interval $[0,1]$ is an exact solution of the modified problem for all $t \in [0,1]$. The corresponding semi-discrete formulation (\ref{eq:mixfem1})--(\ref{eq:mixfem3}) is then solved for $h = 1/N$ for various values of $N$  using the classical fourth-order Runge–Kutta method. The time step $\Delta t$ is chosen sufficiently small so that $(\Delta t)^4 \ll h^{k+1}$. The resulting errors, $\|u - \tilde{u}\|$, $\|u_x - P[\tilde{u}_x]\|$ and $\|u_x-\tilde{u}_x\|$, are recorded at $t = 1$ for $k = 1, 2, \dots, 6$.
\begin{figure}[t]
\centering
\includegraphics[width=0.7\columnwidth]{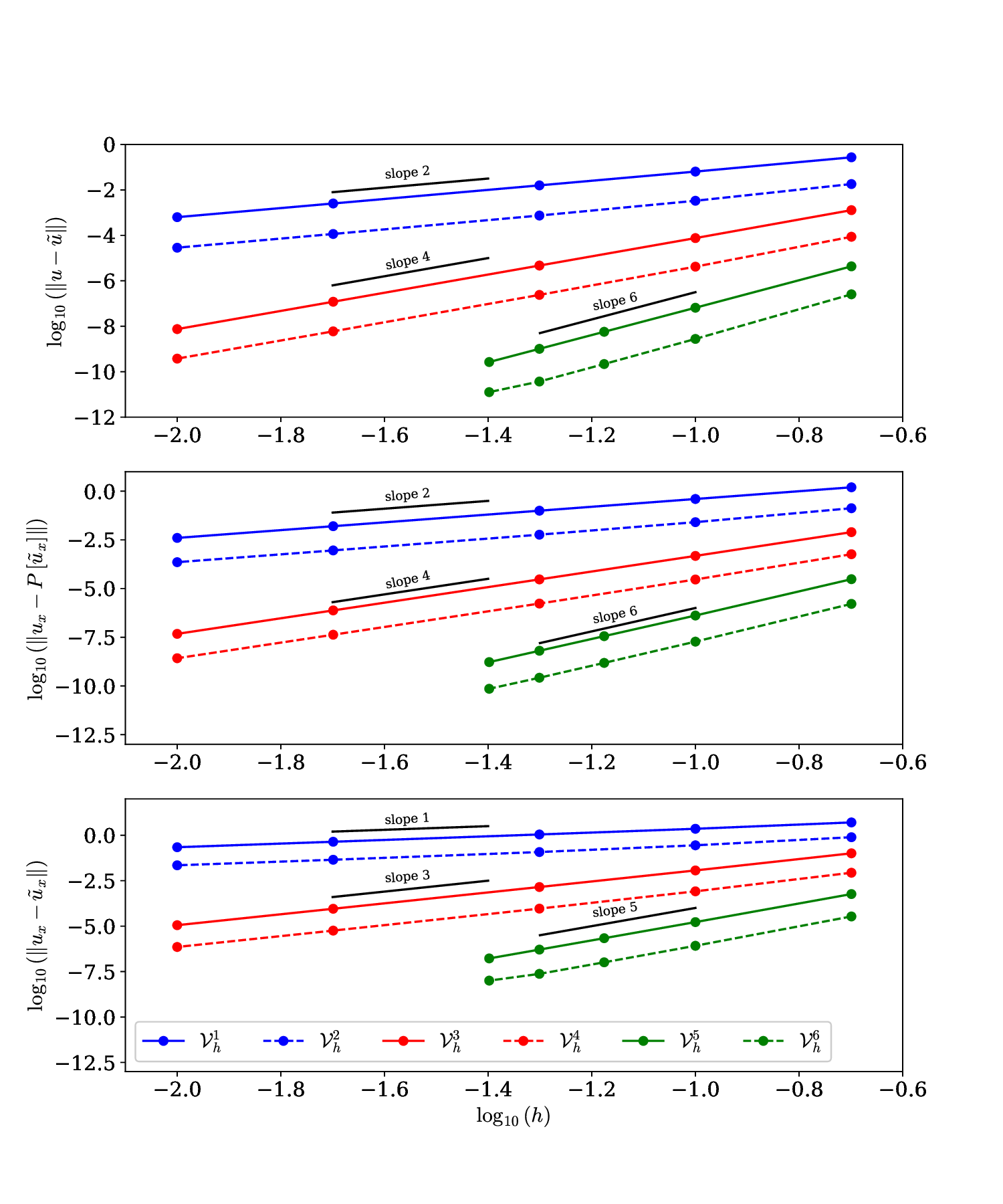}
\caption{Experimental convergence rates for $k=1,2,\dots,6$}
\label{fig:rates}
\end{figure}
Figure \ref{fig:rates} shows logarithmic plots of these errors versus the mesh size $h$ for $k = 1, 2, \dots, 6$. The data form straight lines with slopes close to $2$, $4$, and $6$, respectively, which confirms the expected convergence behavior. It is also noteworthy that, for even values of $k$, the convergence rates associated with the spaces $\mathcal{V}_h^{k-1}$ and $\mathcal{V}_h^k$ are nearly the same, yet the errors are consistently smaller in the latter case.

It is worth noting that the computation of $P[\tilde{u}_x]$ was not explicitly required, as we instead used the corresponding values of the auxiliary variable $\tilde{w}$ to evaluate the errors $\|u_x - P[\tilde{u}_x]\|$ due to Remark \ref{rmk:doubleu}.

\subsection{Energy-conservative fully discrete scheme}

The semi-discrete system (\ref{eq:mixfem1})--(\ref{eq:mixfem3}) is a system of ordinary differential equations of the form $\bM\by'=\bF$, where $$\bM = \begin{pmatrix}
    \bA & \bB\\
    \bB & \bA
\end{pmatrix}\ ,$$
$\bA$ and $\bB$ matrices with $\bA_{ij}=(\varphi_i,\varphi_j)$ and $\bB_{ij}=(\varphi_i,\varphi_j')$, and $\bF=\bF(\by)$ (see also the proof of Theorem \ref{thrm:Theorem1}). To integrate this particular system numerically, we first need to invert the matrix $\bM$. 

In case $\bA$ and $\bB$ are circulant, a linear system of the form
$\bM\bX=\bF$ with unknown vector $\bX=(\bx,\by)^T$ can be solved using the Fast Fourier Transform as a system of two equations with two unknowns $\bx$ and $\by$. For example, in the case of cubic splines or piecewise linear elements, denote the Discrete Fourier Transform matrix and its inverse (implemented via the Fast Fourier Transform) by $\mathcal{F}$ and $\mathcal{F}^{-1}$. Also, consider $\ba, \bb$ be the first rows of the matrix $\bA$ and $\bB$, respectively. Then, the algorithm for the solution of our system is presented in Algorithm \ref{alg:blockcirc}.

\begin{algorithm}
\begin{algorithmic}
\State Set $\mathcal{D}_\bA=\diag(\mathcal{F}\ba)$ and $\mathcal{D}_\bB=\diag(\mathcal{F}\bb)$
\State Set $\hat{\bz}=\mathcal{F}^{-1}\bz$
\State Compute $\hat{\by}=(\mathcal{D}_\bB \mathcal{D}_\bA^{-1}\mathcal{D}_\bB-\mathcal{D}_\bA)^{-1} \mathcal{D}_\bB\mathcal{D}_\bA^{-1}\hat{\bz}$ 
\State Compute $\hat{\bx}=\mathcal{D}_\bA^{-1}(\hat{\bz}-\mathcal{D}_\bB \hat{\by})$
\State The solution is $\bx=\mathcal{F}\hat{\bx}$ and $\by=\mathcal{F}\hat{\by}$. 
\end{algorithmic}\caption{Solution of block circulant system using FFT}\label{alg:blockcirc}
\end{algorithm}

In the case of Lagrange elements with $k>1$, the matrices $\bA$ and $\bB$ are banded but not circulant, and thus the inversion of the matrix $\bM$ can be performed efficiently using the block Sherman-Morrison-Woodbury (SMW) iterative method for nearly circulant matrices of \cite{Mitsotakis2024}. 

Assume that we have inverted the matrix $\bM$ and denote the resulting initial-value problem as
\begin{equation}\label{eq:ode}
\begin{aligned}
\frac{d}{dt}\by(t)&=\btf (t,\by(t)),\quad t\in (0,T],\\
\by(0)&=\by_0,
\end{aligned}
\end{equation}
where $\by$ is the unknown vector function. In order to preserve the energy functional $\mathcal{E}$ we employ the so-called Relaxation Runge--Kutta methods \cite{KE2019,RK2020}. For a uniform timestep $\Delta t\in (0,1)$ we consider a uniform grid $0=t_0<t_1<\cdots<t_K=T$ with $t_{i+1}=t_i+(i+1)\Delta t$ for all $i=0,1,\dots,K-1$. We also consider an explicit Runge--Kutta method with $s$ stages described by the Butcher tableau
\begin{align}\label{eq:butcher}
\begin{array}{c|c}
\bc & \bA\\
\hline
& \bb^\mathrm{T}
\end{array},
\end{align}
where $\bA=[a_{ij}]_{i,j=1}^s$ is an $s\times s$ lower-triangular matrix with zeros
in the principal diagonal, and $\bb=[b_j]_{j=1}^s$ and $\bc=[c_j]_{j=1}^s$ are
$s$-dimensional vectors. Note that here the matrix $\bA$ and the vector $\bb$ are different from those in Algorithm \ref{alg:blockcirc}. Let $\by^n$ be an approximation of $\by(t^n)$,
then the explicit Runge--Kutta method that corresponds to the Butcher tableau (\ref{eq:butcher})
can be expressed as
\begin{align}
\tilde{\by}^i&=\by^n+\Delta t\sum_{j=1}^{i-1} a_{ij}\,\btf(t_n+c_j\Delta t, \tilde{\by}^j), \quad i=1,2,\ldots,s, \label{eq:stages}\\
\by(t_n+\Delta t)\approx \by^{n+1}&=\by^n+\Delta t\sum_{i=1}^sb_i \,\btf(t_n+c_i\Delta t,\tilde{\by}^i). \label{eq:updates}
\end{align}

\begin{figure}[t]
\centering
\includegraphics[width=1\columnwidth]{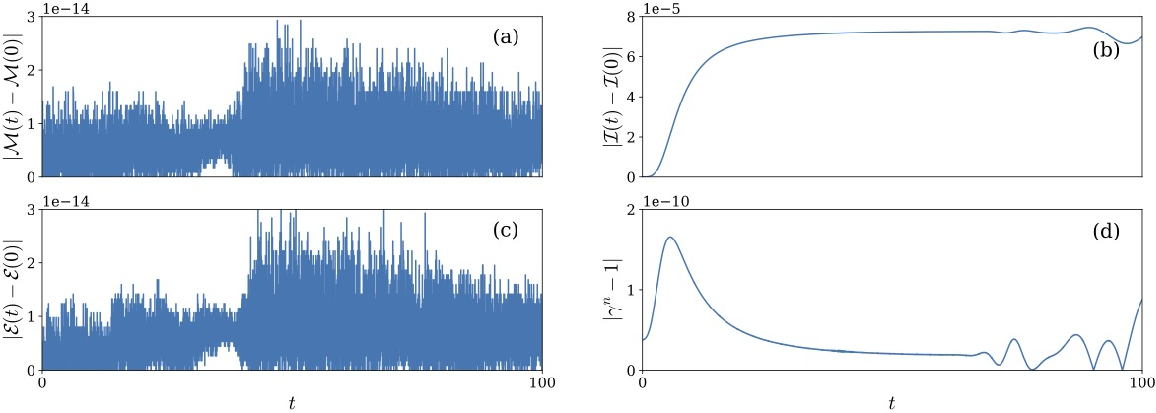}
\caption{Variation of (a) mass $\mathcal{M}$, (b) impulse $\mathcal{I}$, (c) energy $\mathcal{E}$, and (d) relaxation parameter $\gamma^n$ for $k=1$}
\label{fig:conserve}
\end{figure}

Let
$$\bd^n=\sum_{i=1}^s b_i \,\btf_i,$$
with $\btf_i=\btf(t_n+c_i\Delta t,\tilde{\by}^i)$, then the corresponding
relaxation Runge--Kutta method is formulated by the replacement of the update
formula (\ref{eq:updates}) with an update in the same direction as the previous
formula but of a different length:
\begin{align}\label{eq:rrkupdate}
\by(t_n+\gamma^n\Delta t)\approx \by_\gamma^{n+1}=\by^n+\gamma^n\Delta t \bd^n,
\end{align}
where $\by^n=\by^{n}_\gamma$, for $n=1,2,\ldots$.
The parameter $\gamma^n$ is called the relaxation parameter such that
$$\mathcal{E}(t^{n}+\gamma^n\Delta t; \by^{n+1}_\gamma)=\mathcal{E}(t^n;\by^n),$$
where $\mathcal{E}$ is the energy functional (\ref{eq:energyc}). The resulting equation is a quadratic equation in terms of $\gamma^n \Delta t$, which can be solved efficiently using Newton's method to avoid possible catastrophic cancellation errors \cite{Mitsotakis2023, Mitsotakis2025}.
The relaxation Runge--Kutta method is practically adapting the time step $\Delta t$ at every timestep to $\Delta t^n_\gamma=\gamma^n\Delta t$.

To demonstrate the approximation properties of the energy-conservative method we considered the initial-periodic boundary value problem (\ref{eq:ipbvp}) in $[-100,100]$, with a generic initial condition $U_0(x)=e^{-x^2/10}$ for times up to $T=100$. Such an initial condition is not a traveling wave, and thus the method will be unbiased during the computation of the conserved quantities. For this experiment we used $h=0.1$ and $\Delta t=0.01$.

Figures \ref{fig:conserve} (a), (b), and (c) illustrate the absolute errors in mass, impulse, and energy, respectively. As anticipated, the mass and energy are conserved up to machine precision, while the error in impulse is $O(10^{-5})$. In Figure \ref{fig:conserve}(d) we observe that the relaxation parameter $\gamma_n$ is $1+O(10^{-10})$, verifying again the theoretical estimates of \cite{KE2019}. Similar results were observed for $k>1$, but we omit them for brevity. 

In conclusion, we validate the convergence rate of the Impulse functional $\mathcal{I}$. We use the same initial condition $U_0$ as before in the interval $[-50, 50]$, and compute the errors $\max_{t_n} |\mathcal{I}(t_n; \tilde{u}) - \mathcal{I}(0; \tilde{u})|$ for $h = 1/N$ with $N = 100, 200, 500, 800, 1000$ for $k = 1, 2, 3$, and $N = 100, 200, 400, 500, 800$ for $k = 4$ up to $T=10$. For $k = 1, 2, 3$, we use $\Delta t = 0.01$, while for $k = 3$ and $k = 4$, we use $\Delta t = 0.001$ and $\Delta t = 0.0005$, respectively. The results shown in Figure~\ref{fig:ratesI} present logarithmic plots of the errors versus $h$ and clearly confirm the predicted convergence-rate pattern of Proposition~\ref{prop:impulsec} for $k=1,2,3,4$. For $k>4$, the errors are already at the level of machine precision, making it difficult to experimentally verify convergence rates of order $8$.

\begin{figure}[t]
\centering
\includegraphics[width=0.9\columnwidth]{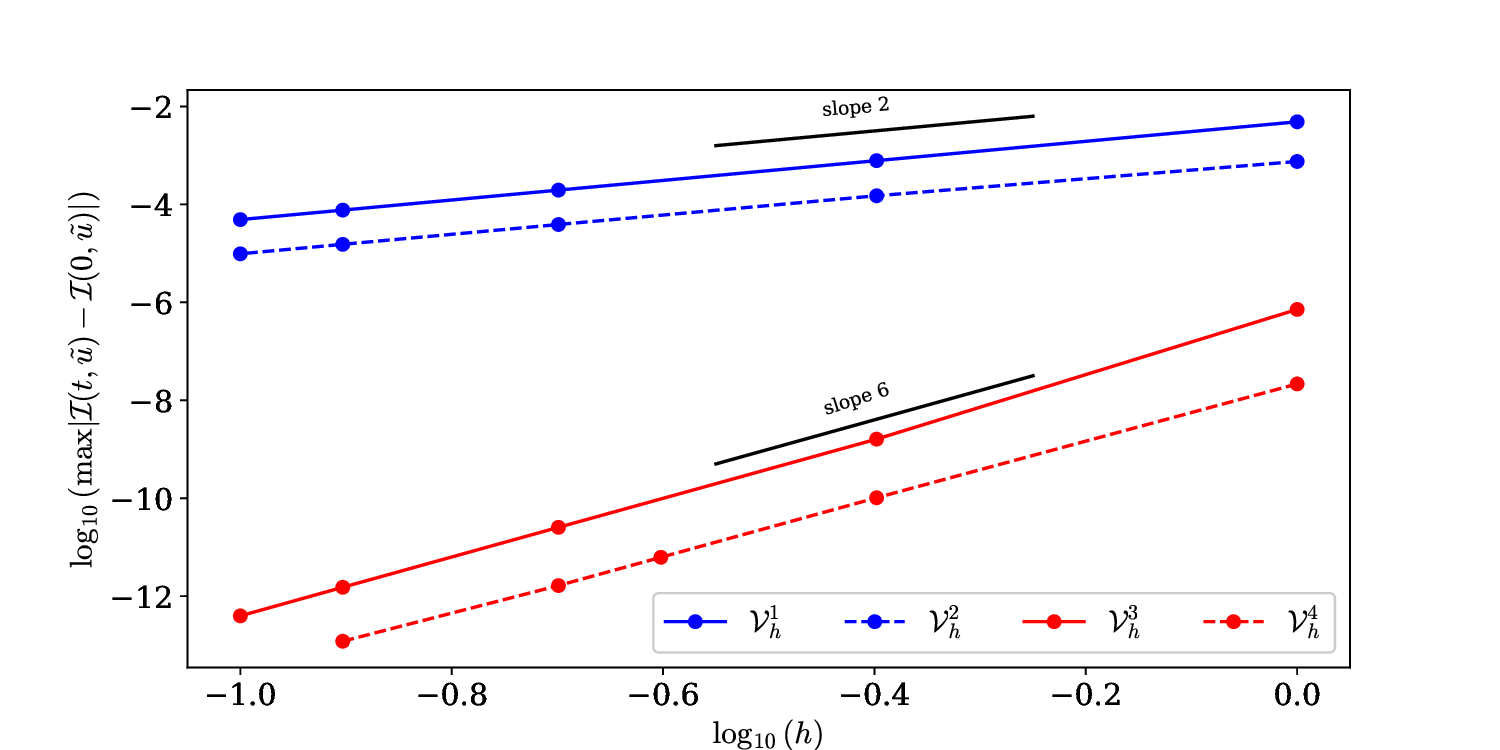}
\caption{Experimental convergence rates of impulse $\mathcal{I}$ for $k=1,2,3,4$}
\label{fig:ratesI}
\end{figure}

\section{Conclusions}\label{sec:conclusions}

In this work, we proved that the $L^2$ projection of sufficiently smooth and periodic function $u$ onto Lagrange finite element spaces of periodic piecewise polynomial functions satisfies
$\|P[(Pu-u)_x]\| = O(h^4)$ if $k=1$, $\|P[(Pu-u)_x]\| =O(h^{k+1})$ if $k=2\kappa+1$, $\kappa\ge 1$, and $\|P[(Pu-u)_x]\| =O(h^{k})$ if $k=2\kappa$, $\kappa\ge 1$.
This approximation property of finite element spaces directly influences the convergence of energy-conservative finite element methods applied to certain nonlinear and dispersive wave equations. We illustrate this effect by deriving {\em a priori} error estimates of a finite element method for the regularized long-wave equation that conserves a cubic energy functional and the linear mass functional. The theoretical findings are further confirmed by numerical experiments.

\bibliographystyle{plain}
\bibliography{biblio}

@book{EG2004,
  title={Theory and practice of finite elements},
  author={Ern, A. and Guermond, J.-L.},
  volume={159},
  year={2004},
  publisher={Springer},
  doi={10.1007/978-1-4757-4355-5}
}

@article{I2010,
	author = {Israwi, S.},
	title = {{Variable depth KdV equations and generalizations  to more nonlinear regimes}},
	DOI= {10.1051/m2an/2010005},
	journal = {ESAIM: M2AN},
	year = {2010},
	volume = {44},
	pages = {347--370}
}

@book{Solin,
 title = {Partial differential equations and the finite element method},
 author = {\v{S}ol\'{i}n, P},
 publisher = {John Wiley \& Sons, Inc},
 DOI = {10.1002/0471764108},
 year = {2025}
}

@article{DDM2019i,
author = {Dur\'{a}n, A. and Dutykh, D. and Mitsotakis, D.},
title = {{On the multi-symplectic structure of Boussinesq-type systems. I: Derivation and mathematical properties}},
journal = {Physica D: Nonlinear Phenomena},
volume = {388},
pages = {10--21},
year = {2019},
doi = {10.1016/j.physd.2018.11.007}
}

@article{DDM2019ii,
author = {Dur\'{a}n, A. and Dutykh, D. and Mitsotakis, D.},
title = {{On the multi-symplectic structure of Boussinesq-type systems. II: Geometric discretization}},
journal = {Physica D: Nonlinear Phenomena},
volume = {397},
pages = {-16},
year = {2019},
doi = {10.1016/j.physd.2019.05.002}
}

@book{Mitsotakis2023,
 title = {{Computational mathematics: An introduction to numerical analysis and scientific computing with Python}},
 author = {Mitsotakis, D.},
 publisher = {Chapman and Hall/CRC Press, New York},
 DOI = {10.1201/9781003287292},
 year = {2023}
}

@article{Mitsotakis2025,
    AUTHOR = {Mitsotakis, Dimitrios},
     TITLE = {The complex-step {N}ewton method and its convergence},
   JOURNAL = {Numerische Mathematik},
    VOLUME = {157},
      YEAR = {2025},
     PAGES = {993--1021},
       DOI = {10.1007/s00211-025-01471-w}
}

@article{Mitsotakis2024,
author = {Mitsotakis, D.},
title = {On iterative methods based on {S}herman-{M}orrison-{W}oodbury splitting},
journal = {Applied Numerical Mathematics},
volume = {201},
pages = {282--289},
year = {2024},
doi = {10.1016/j.apnum.2024.03.007}
}

@article{AM2025,
	author = {Antonopoulos, D. and Mitsotakis, D.},
	doi = {10.1007/s10915-024-02742-8},
	journal = {Journal of Scientific Computing},
	pages = {1--42},
	title = {{B}ona–{S}mith-type systems in bounded domains with slip-wall boundary conditions: {T}heoretical justification and a conservative numerical scheme},
	volume = {102},
	year = {2025}}

@book{P1981, 
place={Cambridge}, 
title={Approximation theory and methods}, 
publisher={Cambridge University Press}, 
author={Powell, M. J. D.}, 
year={1981},
doi={10.1017/CBO9781139171502}
}

@article{CT1987,
	author = {Crouzeix, M. and Thom\'{e}e, V.},
	doi = {10.2307/2007825 },
	journal = {Mathematics of Computation},
	pages = {521--532},
	title = {{The stability in $L_p$ and $W_p^1$ of the $L_2$-projection onto finite elements function spaces}},
	volume = {178},
	year = {1987}}

@article{TW1974,
	author = {Thom\'{e}e, V. and Wendroff, B.},
	doi = {10.1137/0711081},
	journal = {SIAM Journal on Numerical Analysis},
	pages = {1059--1068},
	title = {Convergence estimates for {G}alerkin methods for variable coefficient initial value problems},
	volume = {11},
	year = {1974}}

@article{GH2006,
	author = {Guo, L. and Chen, H.},
	doi = {10.1007/s00607-005-0158-7},
	journal = {Computing},
	pages = {205--221},
	title = {{$H_1$-Galerkin mixed finite element method for the regularized long wave equation}},
	volume = {77},
	year = {2006}}

@article{AD2013,
  title={Error estimates for {G}alerkin approximations of the ``classical'' {B}oussinesq system},
  author={Antonopoulos, D. and Dougalis, V.},
  journal={Mathematics of Computation},
  volume={82},
  number={282},
  pages={689--717},
  year={2013},
  doi={10.1090/S0025-5718-2012-02663-9}
}

@article{Cai2009b,
	author = {Cai, J.},
	doi = {10.1063/1.3068404},
	journal = {Journal of Mathematical Physics},
	pages = {013535},
	title = {A new explicit multisymplectic scheme for the regularized long-wave equation},
	volume = {50},
	year = {2009}}

@article{ADT1981,
  title={Superconvergence of a finite element approximation to the solution of a {S}obolev equation in a single space variable},
  author={Arnold, D. and Douglas, J. and Thom{\'e}e, V.},
  journal={Mathematics of Computation},
  year={1981},
  volume={36},
  pages={53-63},
  DOI={10.1090/S0025-5718-1981-0595041-4}
}

@article{Wahlbin1974,
	author = {Wahlbin, L.},
	doi = {10.1007/BF01438256},
	journal = {Numerische Mathematik},
	pages = {289--303},
	title = {{Error estimates for a Galerkin method for a class of model equations for long waves}},
	volume = {23},
	year = {1974}}

@article{MRKS2021,
	author = {Mitsotakis, D. and Ranocha, H. and Ketcheson, D. and S\"{u}li, E.},
	doi = {10.1137/20M1364606},
	journal = {SIAM Journal on Scientific Computing},
	pages = {B508-B537},
	title = {A Conservative Fully Discrete Numerical Method for the Regularized Shallow Water Wave Equations},
	volume = {43},
	year = {2021}}

@article{CWG1991,
	author = {Chang, Q. and Wang, G. and Guo, B.},
	doi = {10.1016/0021-9991(91)90189-R},
	journal = {Journal of Computational Physics},
	number = {2},
	pages = {360--375},
	title = {Conservative scheme for a model of nonlinear dispersive waves and its solitary waves induced by boundary motion},
	volume = {93},
	year = {1991}
 }

@article{SQ2004,
 author = {Sun, Y.-J. and Qin, M.-Z.},
 journal = {Journal of Computational Mathematics},
 number = {4},
 pages = {611--621},
 title = {A multi-symplectic scheme for {RLW} equation},
 volume = {22},
 year = {2004}
}

@article{Olver1979, 
    author={Olver, P.}, 
    title={Euler operators and conservation laws of the {BBM} equation}, 
    volume={85}, 
    DOI={10.1017/S0305004100055572}, 
    journal={Mathematical Proceedings of the Cambridge Philosophical Society}, 
    year={1979}, 
    pages={143--160}}

@article{Cai2009,
	author = {Cai, J.},
	doi = {10.1016/j.cpc.2009.05.009},
	journal = {Computer Physics Communications},
	number = {10},
	pages = {1821--1831},
	title = {Multisymplectic numerical method for the regularized long-wave equation},
    volume = {180},
	year = {2009}}

@article{Benjamin1984,
    author = {Benjamin, B.},
    title = {Impulse, flow force and variational principles},
    journal = {IMA Journal of Applied Mathematics},
    volume = {32},
    pages = {3--68},
    year = {1984},
    doi = {10.1093/imamat/32.1-3.3}
}

@article{AK2014,
	author = {Ali, A. and Kalisch, H.},
	doi = {10.1007/s10440-013-9861-0},
	journal = {Acta Applicandae Mathematicae},
	pages = {113--131},
	title = {On the formulation of mass, momentum and energy conservation in the {KdV} equation},
	volume = {133},
	year = {2014}}

@article{KKM2016,
	author = {Kalisch, H. and Khorsand, Z. and Mitsotakis, D.},
	doi = {https://doi.org/10.1016/j.physd.2016.03.001},
	journal = {Physica D: Nonlinear Phenomena},
	pages = {243--253},
	title = {Mechanical balance laws for fully nonlinear and weakly dispersive water waves},
	volume = {333},
	year = {2016}}

@article{BDM2007i,
	abstract = {Considered here is a Boussinesq system of equations from surface water wave theory. The particular system is one of a class of equations derived and analyzed in recent studies. After a brief review of theoretical aspects of this system, attention is turned to numerical methods for the approximation of its solutions with appropriate initial and boundary conditions. Because the system has a spatial structure somewhat like that of the Korteweg--de Vries equation, explicit schemes have unacceptable stability limitations. We instead implement a highly accurate, unconditionally stable scheme that features a Galerkin method with periodic splines to approximate the spatial structure and a two-stage Gauss--Legendre implicit Runge-Kutta method for the temporal discretization. After suitable testing of the numerical scheme, it is used to examine the travelling-wave solutions of the system. These are found to be generalized solitary waves, which are symmetric about their crest and which decay to small amplitude periodic structures as the spatial variable becomes large.},
	author = {Bona, J. and Dougalis, V. and Mitsotakis, D.},
	date-added = {2021-08-29 12:10:21 +1200},
	date-modified = {2021-08-29 12:10:21 +1200},
	doi = {10.1016/j.matcom.2006.10.004},
	journal = {Math. Comp. Simul.},
	pages = {214--228},
	title = {{Numerical solution of KdV--KdV systems of Boussinesq equations: I. The numerical scheme and generalized solitary waves}},
	volume = {74},
	year = {2007},
	bdsk-url-1 = {https://www.sciencedirect.com/science/article/pii/S0378475406002643},
	bdsk-url-2 = {https://doi.org/10.1016/j.matcom.2006.10.004}}

@article{BDM2008ii,
	abstract = {In this paper we consider a coupled KdV system of Boussinesq type and its symmetric version. These systems were previously shown to possess generalized solitary waves consisting of a solitary pulse that decays symmetrically to oscillations of small, constant amplitude. We solve numerically the periodic initial-value problem for these systems using a high order accurate, fully discrete, Galerkin-finite element method. (In the case of the symmetric system, it is possible to prove rigorous, optimal-order, error estimates for this scheme.) The numerical scheme is used in an exploratory fashion to study radiating solitary-wave solutions of these systems that consist, in their simplest form, of a main, solitary-wave-like pulse that decays asymmetrically to small-amplitude, outward-propagating, oscillatory wave trains (ripples). In particular, we study the generation of radiating solitary waves, the onset of ripple formation and various aspects of the interaction and long time behaviour of these solutions.},
	author = {Bona, J. and Dougalis, V. and Mitsotakis, D.},
	date-added = {2021-08-29 12:06:35 +1200},
	date-modified = {2021-08-29 12:07:26 +1200},
	doi = {10.1088/0951-7715/21/12/006},
	journal = {Nonlinearity},
	pages = {2825--2848},
	title = {{Numerical solution of Boussinesq systems of KdV--KdV type: II. Evolution of radiating solitary waves}},
	volume = {21},
	year = {2008},
	bdsk-url-1 = {http://dx.doi.org/10.1088/0951-7715/21/12/006}}

@article{IKKM2021,
	author = {Israwi, S. and Kalisch, H. and Katsaounis, T. and Mitsotakis, D.},
	date-added = {2021-08-10 10:36:01 +1200},
	date-modified = {2022-08-04 13:24:32 +1200},
	doi = {10.1088/1361-6544/ac3c29},
	journal = {Nonlinearity},
	pages = {750--786},
	title = {{A regualarized shallow-water waves system with slip-wall boundary conditions in a basin: Theory and numerical analysis}},
	volume = {35},
	year = {2022},
	bdsk-url-1 = {https://doi.org/10.1088/1361-6544/ac3c29}}

@article{BD1980,
	author = {Bona, J. and Dougalis, V.},
	date-added = {2021-07-30 12:33:31 +1200},
	date-modified = {2021-07-30 12:34:09 +1200},
	doi = {10.1016/0022-247X(80)90098-0},
	journal = {Journal of Mathematical Analysis and Applications},
	pages = {503--522},
	title = {An initial-and boundary-value problem for a model equation for propagation of long waves},
	volume = {75},
	year = {1980},
	bdsk-url-1 = {https://doi.org/10.1016/0022-247X(80)90098-0}}

@article{BDKM1995,
	author = {Bona, J. and Dougalis, V. and Karakashian, O. and McKinney, W.},
	date-added = {2021-07-30 12:18:19 +1200},
	date-modified = {2021-07-30 12:19:58 +1200},
	doi = {10.1098/rsta.1995.0027},
	journal = {Philosophical Transactions of the Royal Society of London. Series A: Physical and Engineering Sciences},
	pages = {107--164},
	title = {{Conservative, high-order numerical schemes for the generalized Korteweg--de Vries equation}},
	volume = {351},
	year = {1995},
	bdsk-url-1 = {https://doi.org/10.1098/rsta.1995.0027}}

@article{BBM1972,
	author = {Benjamin, B. and Bona, J. and Mahony, J.},
	date-added = {2021-07-30 11:49:08 +1200},
	date-modified = {2021-07-30 11:49:57 +1200},
	doi = {10.1098/rsta.1972.0032},
	journal = {Philosophical Transactions of the Royal Society of London. Series A, Mathematical and Physical Sciences},
	number = {1220},
	pages = {47--78},
	title = {Model equations for long waves in nonlinear dispersive systems},
	volume = {272},
	year = {1972},
	bdsk-url-1 = {https://doi.org/10.1098/rsta.1972.0032}}

@article{Pere1966,
	author = {Peregrine, H.},
	date-added = {2021-07-30 11:47:08 +1200},
	date-modified = {2021-07-30 11:47:45 +1200},
	doi = {10.1017/S0022112066001678},
	journal = {J. Fluid Mech.},
	pages = {321--330},
	title = {Calculations of the development of an undular bore},
	volume = {25},
	year = {1966},
	bdsk-url-1 = {https://doi.org/10.1017/S0022112066001678}}

@article{BS1975,
	author = {Bona, J. and Smith, R.},
	date-added = {2021-07-16 16:39:08 +1200},
	date-modified = {2021-07-16 16:40:47 +1200},
	doi = {10.1098/rsta.1975.0035},
	journal = {Philosophical Transactions of the Royal Society of London. Series A, Mathematical and Physical Sciences},
	pages = {555--601},
	title = {{The initial-value problem for the Korteweg-de Vries equation}},
	volume = {278},
	year = {1975},
	bdsk-url-1 = {https://doi.org/10.1098/rsta.1975.0035}}

@article{KDV1895,
	author = {Korteweg, D.J. and de Vries, G.},
	date-added = {2021-07-16 12:45:23 +1200},
	date-modified = {2021-07-16 12:47:01 +1200},
	doi = {10.1080/14786449508620739},
	journal = {The London, Edinburgh, and Dublin Philosophical Magazine and Journal of Science},
	number = {240},
	pages = {422-443},
	title = {{XLI. On the change of form of long waves advancing in a rectangular canal, and on a new type of long stationary waves}},
	volume = {39},
	year = {1895},
	bdsk-url-1 = {https://doi.org/10.1080/14786449508620739}}

@article{Fenics,
	author = {Alnaes, M. and Blechta, J. and Hake, J. and Johansson, A. and Kehlet, B. and Logg, A. and Richardson, C. and Ring, J. and Rognes, M. and Wells, G.},
	date-added = {2020-04-16 13:54:16 +1200},
	date-modified = {2020-04-16 13:54:16 +1200},
	doi = {10.11588/ans.2015.100.20553},
	journal = {Archive of Numerical Software},
	number = {100},
	pages = {9-23},
	title = {{The FEniCS Project Version 1.5}},
	volume = {3},
	year = {2015},
	bdsk-url-1 = {https://doi.org/10.11588/ans.2015.100.20553}}

@article{RK2020,
	author = {Ranocha, H. and Ketcheson, D.},
	date-modified = {2021-08-23 16:03:19 +1200},
	doi = {10.1007/s10915-020-01277-y},
	eprint = {2001.04826},
	eprintclass = {math.NA},
	eprinttype = {arxiv},
	journal = {Journal of Scientific Computing},
	month = {07},
	number = {1},
	publisher = {Springer Nature},
	title = {Relaxation {R}unge-{K}utta methods for {H}amiltonian Problems},
	volume = {84},
	year = {2020},
	bdsk-url-1 = {https://doi.org/10.1007/s10915-020-01277-y}}

@article{KE2019,
	author = {Ketcheson, D.},
	date-added = {2020-04-02 11:13:57 +1300},
	date-modified = {2020-04-02 11:19:38 +1300},
	doi = {10.1137/19M1263662},
	journal = {SIAM J. Numer. Anal.},
	pages = {2850--2870},
	title = {{Relaxation Runge-Kutta methods: Conservation and stability for inner-product norms}},
	volume = {57},
	year = {2019},
	bdsk-url-1 = {https://doi.org/10.1137/19M1263662}}
\bigskip

\end{document}